\newcolumntype{x}[1]{>{\centering\arraybackslash\hspace{0pt}}p{#1}}
\newtheorem{theorem}{Theorem}
\newtheorem{remark}{Remark}
\newtheorem{definition}{Definition}
\DeclarePairedDelimiter\floor{\lfloor}{\rfloor}
\title{Non-Monotonicity of Branching Rules with respect to Linear Relaxations}
\author[1]{Prachi Shah\thanks{prachi.shah@gatech.edu}}
\author[1]{Santanu S. Dey\thanks{santanu.dey@isye.gatech.edu}}
\author[2]{Marco Molinaro\thanks{mmolinaro@microsoft.com}}
\affil[1]{School of Industrial and Systems Engineering, Georgia Institute of Technology}
\affil[2]{Microsoft Research (Redmond) and Computer Science Department, PUC-Rio}
\date{\vspace{-5ex}}
\begin{document}

%

%
%\titlerunning{Abbreviated paper title}
% If the paper title is too long for the running head, you can set
% an abbreviated paper title here
%
% \author{Prachi Shah and
% Santanu S. Dey}
%
% \authorrunning{P. Shah and S.S. Dey}
% First names are abbreviated in the running head.
% If there are more than two authors, 'et al.' is used.
%
% \institute{School of Industrial and Systems Engineering, Georgia Institute of Technology}
%
\maketitle              % typeset the header of the contribution

\begin{abstract}
    Modern mixed-integer programming solvers use the branch-and-cut framework, where cutting planes are added to improve the tightness of the linear programming (LP) relaxation, with the expectation that the tighter formulation would produce smaller branch-and-bound trees.
    % Cutting planes, commonly referred to as cuts, are an integral part of most modern integer programming solvers and are employed in conjunction with the branch-and-bound algorithm. Cuts improve the tightness of linear programming (LP) relaxations and the quality of dual bounds, it is expected that adding cuts improves the performance of branch-and-bound.
    % Moreover, since the most effective branching rules use local information from the LP relaxation, it may be expected that adding effective cuts should also improve the quality of branching decisions.  
    In this work, we consider the question of whether adding cuts will always lead to smaller trees for a given fixed branching rule. We formally call such a property of a branching rule monotonicity.  
    %by investigating the interaction between cutting planes and branching decisions. 
    %We introduce the notion of monotonicity for branching rules to characterize if the size of the branch-and-bound tree generated by the rule is monotonically non-decreasing with respect to linear relaxations. 
    We prove that any branching rule which exclusively branches on fractional variables in the LP solution is non-monotonic
    %and present 
    % a simple 2-dimensional instance where adding a cut leads to worse branching decisions and thus a bigger tree in spite of improvement in dual bound. 
%    Furthermore, we extend this example to present 
    Moreover, we present a family of instances where adding a single cut leads to an exponential increase in the size of full strong branching trees, despite improving the LP bound. 
    Finally, we empirically attempt to estimate the prevalence of non-monotonicity in practice while using full strong branching. We consider randomly generated multi-dimensional knapsacks tightened by cover cuts as well as instances from the MIPLIB 2017 benchmark set for the computational experiments. Our main insight from these experiments is that if the gap closed by cuts is small, change in
    tree size is difficult to predict, and often increases, possibly due to inherent non-monotonicity. However, when a sufficiently large gap is closed, a significant decrease in tree size may be expected.

% \keywords{Mixed-integer Programming \and Branch-and-bound \and Cutting planes }
\end{abstract}

\section{Introduction}

State-of-the-art mixed integer programming (MIP) solvers are based on the branch-and-cut framework. Classically, the cutting-plane method~\cite{dantzig1954solution,gomoryoutline} and the branch-and-bound method~\cite{land60a} were mostly explored independently. It was later discovered that combining them, by first adding cutting-planes to improve the formulation followed by applying branch-and-bound method produces good computational benefits. See~\cite{cook2010fifty,lodi2010mixed} for great expositions on the origins of the branch-and-cut framework, including discussion on landmark papers such as~\cite{crowder1983solving,van1987solving,padberg1991branch,balas1996mixed,balas1996gomory}.  In recent years, there have been multiple studies understanding the trade-off between branching and the use of cutting-planes~\cite{basu2021complexity,basu2023complexity,kazachkov2023abstract}. In this paper, we study another aspect of the interaction between cutting-planes and the branch-and-bound procedure.

Let us consider two formulations of a given MIP, where the second formulation is tighter than the first, that is the feasible region of the second Linear Programming (LP) relaxation is contained in that of the first relaxation. One could obtain the second relaxation, for example, by adding cutting-planes to the first relaxation. Suppose we have a branch-and-bound tree that solves the MIP using the first formulation. Then, it is clear that the same branch-and-bound tree, that is a tree with the same branching decisions, solves the MIP using the second formulation. 
Therefore, the branch-and-bound tree for the first formulation acts as a \emph{branch-and-bound certificate} for the second formulation as well. 
Thus, one expects that the branch-and-bound tree needed to solve the MIP using the tighter formulation becomes smaller-- this is the basis of the branch-and-cut framework. 
However, in order to use the branch-and-bound method, one must specify two rules: the node selection rule and the branching (variable selection) rule. 
Once these rules are specified, we formally obtain a \emph{branch-and-bound algorithm}. The branching rule typically uses local node LP information to decide on which variable to branch. Therefore, it may be possible that a given branch-and-bound algorithm does not produce the same trees as certificates for the first and the second formulation - even worse it may produce a larger tree for the tighter formulation. The goal of this paper is to study the effect of branching rules on the size of branch-and-bound trees with respect to different formulations. 

%This is the topic of study in this paper. We show that for many standard branching rules in the literature, it is possible that the \emph{the size of the corresponding branch-and-bound tree increases exponentially} when applied to a tighter formulation. 

\paragraph{Formalizing the size of a tree for a given branching rule.}
%\gray{ Some discussion - Adding cuts should not increase the size of the branch-and-bound tree considering that the tree which solves P also solves P' if the branching rule makes the same decision at every node. But this might not be the case as different decisions may be made for P'. Address the question - Can adding cuts lead to worse decisions? }
%Consider a mixed-integer program (MIP) $\min c^T x \ $ s.t. $x \in P \cap \mathcal{X}$, where $P$ is a polyhedron and $\mathcal{X}$ represents integrality constraints. 
Consider two formulations of a MIP: $P, P' \subseteq \mathbb{R}^n$ such that $P \cap \mathcal{X} = P' \cap \mathcal{X}$ where $\mathcal{X}$ represents integrality constraints. 
In order to meaningfully compare the size of branch-and-bound trees generated by a given branching rule for these two polytopes, we should control for all other sources of variability of the branch-and-bound tree:
%\begin{itemize}
%\item 

(1.) \emph{Dual degeneracy:} If there are multiple optimal solutions of the LP at any node of the branch-and-bound tree, then we cannot control which optimal vertex is reported by the LP solver. Since branching rules often use local information, such as the list of fractional variables at a node, this can lead to different trees. An extreme case is when one of the optimal vertex is integral, but other vertices are not integral, see discussion in \cite{dey2023theoretical}. One way to resolve this situation is to compare branch-and-bound trees for $P$ and $P'$, only for objective functions that do not cause any dual degeneracy at any node of any branch-and-bound tree for both $P$ and $P'$. Let $C(P,P') \subseteq \mathbb{R}^n$ be the set of these objectives with no dual degeneracy. Note that $C(P,P')$ only discards finitely many objective functions. 
%\item 

(2.) \emph{Node selection rule:} We will assume that nodes are processed in the best bound first order.  Note that once we solve a problem with any node selection, one can always recover a subtree that solves the same instance, where the node selection rule for the subtree is the best-bound rule -- that is the best-bound rule for node selection produces the smallest trees; see~\cite{dey2021branch} for properties of this rule.
%\end{itemize}

We let $T^r(P, c)$ be the branch-and-bound tree produced by the branching rule $r$ when provided with relaxation $P$ and objective function $c \in C(P, P')$ with the above assumptions. We define the size of the tree as the total number of its nodes and denote it as $|T^r(P, c)|$. 
%Finally, observe that due to our assumptions, $T^r(P)$ is a subtree of any branch-and-bound tree generated by rule $r$ with possibly a different node selection rule and arbitrary LP solver. %We shall later use this fact to extend our results to the case when these assumptions are relaxed.
% 
% Considering minimal tree certificates as opposed to branch-and-bound trees ensures independence of tree size from the order in which nodes are processed and properties of the linear programming (LP) solver. The tree certificate is equivalent to the branch-and-bound tree generated using best bound first rule for node selection if either of the following conditions hold,
% \begin{enumerate}
%     \item The optimal MIP solution is the unique LP optimal solution at any node of the tree where it is an optimal LP solution.
%     \item The LP solver finds an integer optimal solution to the linear relaxation whenever one exists.
% \end{enumerate}
% \red{We shall hereafter assume one of the above mentioned conditions is met and use minimal branch-and-bound tree certificate and branch-and-bound tree interchangeably.}
% 
% We make the following observation regarding minimal tree certificates,
% \begin{observation}\label{obs:min_tree_certificate} \marginpar{\raggedright \red{Prove it?}}
% For any minimal tree certificate $\mathcal{T}$ for a MIP, a node in $\mathcal{T}$ is branched if and only if its dual bound is strictly better than the optimal value of the MIP. 
% \end{observation}
\paragraph{Our contributions.}
Given two linear relaxation $P$ and $P'$ for a MIP, that is 
%We call any polyhedron $P'$, a valid linear relaxation of the MILP if it constitutes an alternative set of linear constraints that can define the MILP, that is, if it satisfies $P' \cap \mathcal{X} = P \cap \mathcal{X}$. 
$P' \cap  \mathcal{X} = P \cap \mathcal{X}$,  we say $P'$ is tighter than $P$ when $P' \subseteq P$. Motivated by the discussion above, we define the following:
%Now consider a valid linear relaxation $P' \subset P$. We then say that $P'$ is tighter than $P$ and define the following property for a branching rule,
\begin{definition} \label{def:monotonic}
A branching rule $r$ is said to be monotonic, if for any MIP, given two linear relaxations $P$ and $P'$ with $P'$ being tighter than $P$, for all $c \in C(P, P')$ we have that $$|T^r(P', c)| \le |T^r(P, c)|$$.
\end{definition}
%Thus, a branching rule is monotonic if it generates a tree of at most the same size when provided with a tighter formulation. 
%Consider a tree $T(P)$, that certifies optimality for $P$, and $T(P')$ generated by enforcing the same branching decisions with relaxation $P'$.Then observe that the restriction of $P'$ at any node of the tree $T(P')$ is also contained in the restriction at the corresponding node of $T(P)$.Thus, the LP bound at any node in $T(P')$ is no better than bound in the corresponding node of $T(P)$. 
%Thus, $T(P')$ certifies optimality of the MIP with relaxation $P'$. In fact, if LP bounds become tight enough, branching may be unnecessary at some internal nodes of $T(P')$ leading to a smaller tree certificate. 
%Thus, we may expect good branching rules to also be monotonic. 
Ideally, we may hope for ``good'' branching rules to be monotonic, which would be a theoretical justification of the branch-and-cut algorithm. 
%However, in practice branching rules make decisions based on local information derived from linear relaxations at the node. Thus, a different relaxation may produce a different tree and monotonicity is not guaranteed in practice.
%\\

%In this work we show that several well-known branching rules in literature, like the strong branching,are not monotonic. 
Our contributions can be summarized as follows: 
\begin{enumerate}[topsep=0pt]
    \item %We introduce the property of monotonicity for branching rules and
    We show that any rule that branches exclusively on fractional variables in the LP solution is non-monotonic. As a consequence, most standard branching rules in literature, including full strong branching (FSB), are not monotonic.
    \item In particular, we prove that adding a single cut may lead to an exponential increase in the size of branch-and-bound trees for most of these branching rules.
    \item Through computational experiments we attempt to estimate the prevalence of non-monotonicity in practice while using FSB. We do so by applying cover cuts on randomly generated multi-dimensional knapsacks as well as by considering cuts applied by SCIP \cite{BestuzhevaEtal2021OO} on MIPLIB 2017 benchmark set \cite{miplib2017}. Our main insight from these experiments is that if the gap closed by cuts
is small, change in tree size is difficult to predict, and often increases, possibly due to inherent non-monotonicity. However, when a sufficiently large gap is closed, a significant decrease in tree size may be expected. %\textcolor{red}{Same something about the results here.}
\end{enumerate}
The rest of the paper is organized as follows. In Section \ref{sec:theorectical_proofs} we provide the background on branching rules in literature and present our theoretical contributions. Section \ref{sec:computational} discusses the computational experiments and the results. Finally in Section \ref{sec:conclusion} we conclude and discuss future directions of research.

\section{Non-Monotonicity of Branching Rules} \label{sec:theorectical_proofs}

\subsection{Background on branching rules} \label{sec:branchingrules}
One of the most important branching rules is called the \textit{full strong branching rule} (FSB)~\cite{applegate1995finding}. It is empirically known to produce one of the smallest trees as compared to all other branching rules~\cite{achterberg2005branching}. 
%Strong branching attempts to maximize the gap closed by the branching disjunction, commonly referred to as LP gains. 
Let $\Delta^0_i$ and $\Delta^1_i$ be the LP gains, i.e. the difference in the LP value of a node and its children when tentatively branched on variable $i$. Then $\Delta^0_i$ and $\Delta^1_i$ are aggregated to obtain a score for variable $i$ and the variable with the highest score is selected for branching\footnote{If a child node is infeasible, the corresponding LP gain is infinite. If multiple branching candidates lead to infeasible children, we branch on the variable with two infeasible nodes if such exists, else, maximize the gain on the feasible branch.}. 
%Depending on the function used for aggregation, there are several variants of the strong branching rule. 
The following functions are the most commonly used \cite{achterberg2005branching,achterberg2007constraint,linderoth1999computational} due to their robust performances,
\begin{enumerate}[topsep=1ex, itemsep=0ex,partopsep=0ex,parsep=1ex]
    \item Product rule : score($i$) = $\max\{\Delta^0_i, \epsilon\} \cdot \max\{\Delta^1_i, \epsilon\}$, where $\epsilon > 0$ is small %chosen to be close to 0 
    \item Linear rule : score($i$) = $(1 - \mu) \min\{\Delta^0_i, \Delta^1_i\} + \mu \max\{\Delta^0_i, \Delta^1_i\}$ for some $\mu \in [0, 1]$. In this paper, we choose $\mu = 1/6$ following the recommendation in~\cite{achterberg2005branching}.
    \item Le Bodic-Nemhauser ratio rule~\cite{le2017abstract}: score($i$) = $1/\phi^*$, where $\phi^*$ is the unique root greater than 1 of the %characteristic 
    polynomial, $p(\phi) = \phi^{\max\{\Delta^0_i, \Delta^1_i\}} - \phi^{|\Delta^0_i - \Delta^1_i|} - 1 = 0$. 
    % In practice $\phi$ is calculated numerically using fixed point iteration.
\end{enumerate}

\subsection{Any rule that branches exclusively on fractional variables is non-monotonic}

Most standard branching rules branch including strong branching, reliability branching, most fractional branching, and maximum separating distance branching 
define the set of candidate branching variables as those that have a fractional value in the LP optimal solution, whereas variables that have an integer value are eliminated from consideration or assigned a zero score.
For our first theoretical result, we show that any branching rule which branches on fractional variables exclusively is non-monotonic, implying that all the above-mentioned rules are also non-monotonic in general.

\begin{theorem}\label{theorem:frac_rules}
    Any branching rule that branches only on fractional variables in the optimal LP solution is non-monotonic.
\end{theorem}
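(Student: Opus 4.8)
The plan is to refute monotonicity by exhibiting a single counterexample: a MIP together with two relaxations $P' \subseteq P$ satisfying $P' \cap \mathcal{X} = P \cap \mathcal{X}$, and an objective $c \in C(P,P')$, for which $|T^r(P',c)| > |T^r(P,c)|$. Because Definition~\ref{def:monotonic} is universally quantified over instances, one such triple suffices; and because Theorem~\ref{theorem:frac_rules} must hold for \emph{every} rule $r$ that branches only on fractional variables, I would make the counterexample \emph{rule-robust} by arranging that at every node of both branch-and-bound trees the LP optimum is fractional in exactly one coordinate. When a node has a unique fractional variable, every fractional-only rule is forced to branch on it, so each tree is determined by the geometry alone and is identical for FSB, the product/linear/ratio scores, most-fractional branching, and the rest.

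The mechanism I would exploit is that a fractional-only rule is \emph{blind} to any variable that happens to be integral in the current LP solution. I would design $P$ so that its root LP optimum is fractional in a single ``good'' variable whose two branches immediately produce integral (hence optimal and leaf) children, giving a tree of size $3$. I would then add one valid inequality to obtain $P'$, chosen to slice off the old fractional optimum and relocate the new root optimum to a vertex that is \emph{integral} in the good variable but fractional in a ``bad'' variable. Since the good variable is now integral, no fractional-only rule may branch on it; the rule is forced onto the bad variable, and I would shape the geometry so that one resulting child is again fractional and must be split once more. Counting nodes then yields $|T^r(P',c)| \ge 5 > 3 = |T^r(P,c)|$, and strict increase already contradicts Definition~\ref{def:monotonic}. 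I expect two integer variables to suffice for this bounded-increase version; the same blindness mechanism, iterated across many coordinates, is what later drives the exponential blow-up for FSB in contribution (2).

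Carrying this out reduces to writing down one explicit system of linear inequalities together with the cut and the objective. The genuinely hard part is the simultaneous bookkeeping: I must verify that (i) the LP optimum at the root of $P$ and at each of its children has the claimed one-dimensional fractional support; (ii) the added inequality is valid, removing no point of $P \cap \mathcal{X}$, yet strictly tighter so that $P' \subsetneq P$; (iii) the relocated root optimum of $P'$ and the optima at every descendant node again have a unique fractional coordinate, so the forced-branching argument applies at each node and the resulting tree is independent of the scoring function; and (iv) $c$ induces no dual degeneracy anywhere, i.e.\ $c \in C(P,P')$, so the reported vertices are unambiguous. Threading a single consistent set of constraints through all four requirements is where the effort lies; once such polytopes are exhibited, the node counts and the conclusion follow immediately.
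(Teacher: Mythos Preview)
Your high-level strategy is sound and matches the paper's: refute monotonicity by a single rule-robust counterexample in which the branching decisions are forced regardless of which fractional-only rule is used. The paper proceeds exactly this way. However, your concrete plan---two binary variables, a $3$-node tree for $P$, and a $\ge 5$-node tree for $P'$---cannot work, and this is a genuine gap rather than bookkeeping.

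Here is the obstruction. Suppose, as you propose, that the root LP optimum of $P$ has a unique fractional coordinate $x_g$ and that both children $\{x_g=0\}$ and $\{x_g=1\}$ have \emph{integral} LP optima, say $(0,k_0)$ and $(1,k_1)$. You then want the cut to relocate the root optimum of $P'$ to a point with $x_g$ integral and $x_b$ fractional, say $(1,\beta)$ with $\beta\notin\mathbb Z$. But $(1,\beta)\in P'\subseteq P$, hence $(1,\beta)\in P\cap\{x_g=1\}$, where $(1,k_1)$ is LP-optimal; thus $c\cdot(1,\beta)\le c\cdot(1,k_1)$. On the other hand $(1,k_1)$ is an integer point of $P$, so it lies in $P'$ as well, giving $c\cdot(1,\beta)\ge c\cdot(1,k_1)$. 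Equality plus $c\in C(P,P')$ forces $(1,\beta)=(1,k_1)$, an integral point, so $P'$ is solved at the root with a $1$-node tree. The same argument rules out $x_g=0$. In short, once both $x_g$-faces of $P$ already have integral optima, every tighter $P'$ with $x_g$ integral at the root is solved immediately; you cannot get $|T^r(P',c)|>|T^r(P,c)|$ from this configuration. The ``unique fractional coordinate at every node'' requirement is therefore too rigid to produce the asymmetry you need in two variables.

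The paper sidesteps this by working in four binary variables with an \emph{infeasible} MIP and by abandoning the unique-fractional requirement below the root. It takes $Q=\mathrm{conv}(V_1\cup V_2\cup V_3)$, a cross-polytope-like set with no integer points, and $P=\mathrm{conv}(Q\cup\{v_0\})$ where $v_0=(1,1,1,\tfrac12)$. With $c=(1,1,1,1-\epsilon)$, the extra vertex $v_0$ is the unique LP optimum of $P$ and is fractional only in $x_4$, so the first branch is forced; after fixing $x_4$ the remaining subproblems are small, yielding an $11$-node tree. For $Q$ the coordinate $x_4$ is integral at every vertex, so no fractional-only rule may ever touch it, and one is left with a genuine $3$-dimensional cross-polytope in $x_1,x_2,x_3$. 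Here the paper does \emph{not} insist on a unique fractional variable; instead it invokes the known lower bound that proving infeasibility of the cross-polytope requires branching on all three coordinates along every root-to-leaf path, forcing a complete $15$-node tree regardless of the rule. The moral is that your rule-robustness idea is right, but to realize it you need a higher-dimensional infeasible instance and a structural lower bound (the cross-polytope argument) rather than the unique-fractional-variable device.
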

\begin{proof}
    Consider the polytope $Q$ formed by the convex hull of the union of the following sets of vertices, 
    \begin{align*}
        V_1 = \{ x \in \mathbb{R}^4 : x_1 = 1/2, \: (x_2, x_3) \in \{0, 1\}^2, \: x_4 = 0 \} \\
        V_2 = \{ x \in \mathbb{R}^4 : x_2 = 1/2, \: (x_1, x_3) \in \{0, 1\}^2, \: x_4 = 1 \} \\
        V_3 = \{ x \in \mathbb{R}^4 : x_3 = 1/2, \: (x_1, x_2) \in \{0, 1\}^2, \: x_4 = 1 \} 
    \end{align*}
    % This is a partially extended formulation due to \cite{bodur2017cutting} of the cross polytope \cite{chvatal1989cutting} in $\mathbb{R}^3$ extended by introducing the variable $x_4$ which indicates if $x_1$ is integral or fractional. 
    Polytope Q is closely related to cross-polytope considered \cite{bodur2017cutting}.
    Further consider the polytope, $P$ created by introducing an additional vertex $ v_0 = (1, 1, 1, 1/2)$, that is, $P = \text{conv} (Q \cup v_0)$ and therefore it trivially follows that $Q \subset P$. In fact, $Q$ can be obtained from $P$ as the lift-and-project closure of $P$ with respect to $x_4$.

    Now, consider the two pure binary programs maximizing the vector $c = (1, 1, 1, 1-\epsilon)$ over $P$ and $Q$ for some $\epsilon \in (0, 1)$. Both problems are infeasible since $P$ and $Q$ do not contain any integral points, but the formulation with $Q$ is tighter. Consider any branching rule that considers only the variables that are fractional in the optimal LP solution. We will now show that it generates a larger tree to prove infeasibility for the formulation with $Q$ than that with $P$.
    
    \begin{figure}
    \captionsetup[subfigure]{aboveskip=12pt,belowskip=0pt}
      \centering
    
      \begin{subfigure}{0.8\textwidth}
        \centering
        \begin{tikzpicture}[level distance=2cm,
                            level 1/.style={sibling distance=6.4cm},
                            level 2/.style={sibling distance=3.3cm},
                            level 3/.style={sibling distance=1.6cm},
                            treenode/.style={draw, shape=rectangle, minimum width = 1.1cm, minimum height = 0.7cm},
                            edge from parent/.append style={font=\footnotesize}]
          \draw[black!0] (1, -0.5) rectangle (1,-4);  
          \node [treenode] {$V_1 \cup V_2 \cup V3 $}
            child {node [treenode] (a) {$V_1$}  
              child {node [treenode] {$\emptyset$} edge from parent node [left] {$x_1=0$}}
              child {node [treenode] {$\emptyset$} edge from parent node [right] {$x_1=1$}}
              edge from parent node [left] {$x_4=0$} 
            }
            child {node [treenode] {$V_2 \cup V_3$}
              child {node [treenode] {$V_3 \cap \{x_2=0\}$} 
                child {node [treenode] {$\emptyset$} edge from parent node [left] {$x_3=0$}}
                child {node [treenode] {$\emptyset$} edge from parent node [right] {$x_3=1$}}
              edge from parent node [left] {$x_2=0$} }
              child {node [treenode] {$V_3 \cap \{x_2=1\}$} 
                child {node [treenode] {$\emptyset$} edge from parent node [left] {$x_3=0$}}
                child {node [treenode] {$\emptyset$} edge from parent node [right] {$x_3=1$}}
              edge from parent node [right] {$x_2=1$} }
              edge from parent node [right] {$x_4=1$} 
            };
            % \path[->] (a) edge[dashed] +(-0.5,-1);
            % \path[->] (a) edge[dashed] +(+0.5,-1);
          
        \end{tikzpicture}
        \caption{Tree given formulation $P$ for any rule considering only fractional variables} \label{fig:fracP}
        \vspace{36pt}
      \end{subfigure}
      \begin{subfigure}{0.8\textwidth}
        \centering
        \begin{tikzpicture}[level distance=2cm,
                            level 1/.style={sibling distance=6.4cm},
                            level 2/.style={sibling distance=3.3cm},
                            level 3/.style={sibling distance=1.6cm},
                            treenode/.style={draw, shape=rectangle, minimum width = 1.1cm, minimum height = 0.7cm},
                            edge from parent/.append style={font=\footnotesize}]
          \draw[black!0] (1, -0.5) rectangle (1,-4);  
          \node [treenode] {$V_1 \cup V_2 \cup V3 $}
            child {node [treenode] {$(V_2 \cup V_3) \cap \{x_1=0\}$}
              child {node [treenode] {$(0, 0, 1/2)$} 
                child {node [treenode] {$\emptyset$} edge from parent node [right] {$\: x_3$}}
                child {node [treenode] {$\emptyset$} edge from parent node [right] {}}
              edge from parent node [left] {$x_2=0$} }
              child {node [treenode] {$(0, 1, 1/2)$} 
                child {node [treenode] {$\emptyset$} edge from parent node [right] {$\: x_3$}}
                child {node [treenode] {$\emptyset$} edge from parent node [right] {}}
              edge from parent node [right] {$x_2=1$} }
              edge from parent node [right] {$x_1=0$} 
            }
            child {node [treenode] {$(V_2 \cup V_3) \cap \{x_1=1\}$}
              child {node [treenode] {$(1, 0, 1/2)$} 
                child {node [treenode] {$\emptyset$} edge from parent node [right] {$\: x_3$}}
                child {node [treenode] {$\emptyset$} edge from parent node [right] {}}
              edge from parent node [left] {$x_2=0$} }
              child {node [treenode] {$(1, 1, 1/2)$} 
                child {node [treenode] {$\emptyset$} edge from parent node [right] {$\: x_3$}}
                child {node [treenode] {$\emptyset$} edge from parent node [right] {}}
              edge from parent node [right] {$x_2=1$} }
              edge from parent node [right] {$x_1=1$} 
            };
            % \path[->] (a) edge[dashed] +(-0.5,-1);
            % \path[->] (a) edge[dashed] +(+0.5,-1);
          
        \end{tikzpicture}
        \caption{Tree given formulation $Q$ for any rule considering only fractional variables } \label{fig:fracQ}
      \end{subfigure}
    
      \caption{Illustration of branch-and-bound trees for the example in Theorem \ref{theorem:frac_rules}.} \label{fig:frac_proof}
      \vspace{-12pt}
    \end{figure}
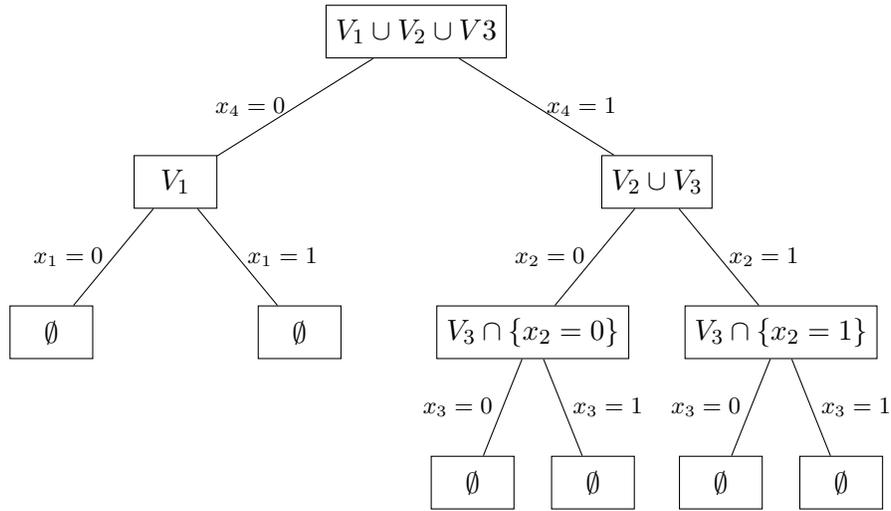
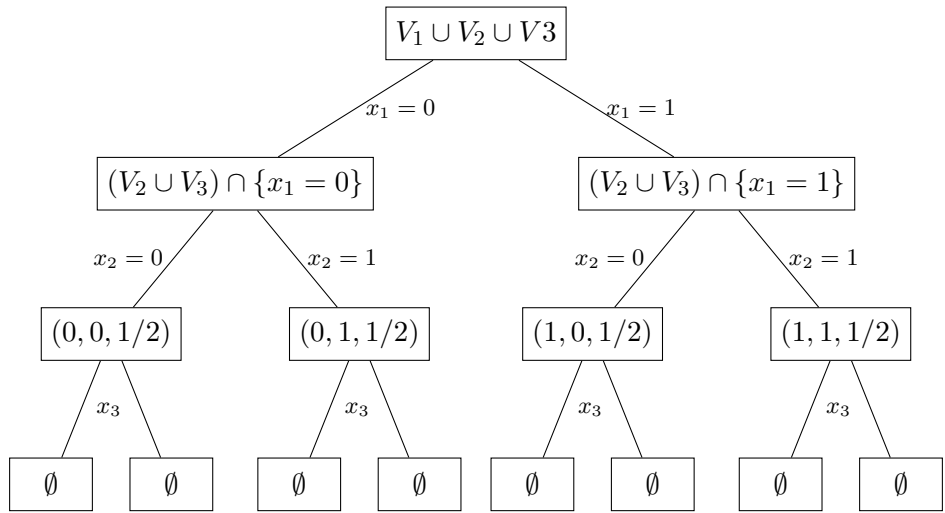

    First, consider the formulation with $P$. The LP optimal solution at the root node is the vertex $v_0$, where only $x_4$ is fractional. So the branching rule must branch on $x_4$. On the branch where $x_4 = 0$, at any vertex, only $x_1$ is fractional and branching on it leads to infeasible nodes on both branches. Whereas, on the branch with $x_4 = 1$, $x_1$ is integral on all vertices and therefore isn't considered on branching. To prove infeasibility, any branching rule must branch on both $x_2$ and $x_3$ as illustrated in Fig.~\ref{fig:fracP} leading to a tree with 11 nodes in total. 

    On the other hand, in the case of the formulation with $Q$, $x_4$ is integral on all vertices of the polytope and is never considered for branching. The branch-and-bound tree is, therefore, the same as that for the cross-polytope which needs to branch on all of the 3 variables, generating a complete tree in $x_1, x_2, x_3$ \cite{dadush2020complexity, dey2023theoretical} shown in Fig.~\ref{fig:fracQ}. This tree has 15 nodes, and the rule is therefore non-monotonic. % cite dadush and strong branching paper
    % Now consider the objective vector $c = (1, 1, 1, 1-\epsilon)$ for some $\epsilon \in (0, 1)$ and the following IP formulations
    % \begin{gather}
    %     \max \{ \: cx \: : x \in P \cap \{0, 1\}^4 \}, \\ 
    %     \max \{ \: cx \: : x \in Q \cap \{0, 1\}^4 \}
    % \end{gather}

    % polytope $P \subset [0, 1]^4$, 
    % \begin{equation}
    %     P = \text{conv} \Bigg( 
    %     \big\{ x_1 = 1/2, x_2 \in \{0, 1\}, x_3 \in \{0, 1\}, x_4 = 0 \big\} 
    %     \Bigg)
    % \end{equation}
\end{proof}

% To show this result, we introduce the abstract problem of Single Variable Branching (SVB) from \cite{le2017abstract}. SVB considers branching on a single variable, which closes constant LP gains, $l$ and $r$, in the left and right children at every node of the branch-and-bound tree. Without loss of generality assume that $l \le r$. 

% \begin{lemma}[Theorem 1 of \cite{le2017abstract}] \label{lemma:SVB_tree}
% The size of the SVB tree required to close the overall gap by $G$ is given by as, 
% \begin{equation}
%     t(G) = 1 + 2 \cdot \sum_{k_r = 1}^{\ceil{\frac{G}{r}}} 
%     \binom{k_r + \ceil{\frac{G - (k_r - 1)r}{l}} -1}{k_r}
% \end{equation}
% Moreover, in any tree that closes gap $G$, the minimum depth of its leaf nodes is $\ceil{\frac{G}{r}}$ and maximum depth of is $\ceil{\frac{G}{l}}$. Therefore,  $2^{\ceil{\frac{G}{r}} + 1} -1 \le t(G) \le 2^{\ceil{\frac{G}{l}} + 1} -1$
% \end{lemma}
\subsection{Exponential increase in size of branch-and-bound tree}
%We next show that adding a single cut can lead to exponentially larger trees. 
We next focus on full strong branching and show that not only it is non-monotonic but in the worst case it can generate exponentially larger trees when provided with a tighter formulation, for any of the scores discussed in Section \ref{sec:branchingrules}.

\begin{figure}[t]
\begin{tabular}{>{\centering\arraybackslash}b{\dimexpr0.4\linewidth-2\tabcolsep\relax} >{\centering\arraybackslash}b{\dimexpr0.6\linewidth-2\tabcolsep\relax}}

\scalebox{0.65}{
  \begin{tikzpicture}[xscale=5, yscale=5]
    % draw the square
    \draw (0,0) rectangle (1,1);
  
    % draw the polygon inside the square and shade it with light blue
    \filldraw[fill=blue!20!white] (0.9,0.5) node[above right] {A} -- (1,0.35) node[right] {B} -- (1,0) node[below] {C} -- (0,0) node[below right] {F} -- (0,0.3) node[left] {E} -- (0.1,1) node[above] {D} -- cycle;

    % draw the line passing through A' and B'
    \draw[dashed] (0.5,0.75) node[above] {A'} -- (1,0.1) node[right] {B'};
    
    % draw the x-axis and label it
    \draw [->] (-0.1,0) -- (1.1,0);
    \node [below] at (1.1,0) {$x$};
  
    % draw the y-axis and label it
    \draw [->] (0,-0.1) -- (0,1.1);
    \node [left] at (0,1.1) {$y$};
    
  \end{tikzpicture}
  }
\caption{Polytopes $P$ and $P'$}\label{fig:sb_example}
    &
\renewcommand{\arraystretch}{1.05}
    \footnotesize
    \begin{tabular}{|x{1.4cm}|x{1.4cm}|x{1.4cm}|x{1.4cm}|}\hline
      Vertex&  $ x $ & $ y $ & Obj val\\ \hline
        $A$ & 0.9 & 0.5 & 7.9 \\
        $B$ & 1 & 0.35 & 7.75 \\
        $\phantom{'}A'$ & 0.5 & 0.75 & 6.75 \\
        $\phantom{'}B'$ & 1 & 0.1 &  6.5 \\
        $C$ & 1 & 0 & 6 \\
        $D$ & 0.1 & 1 & 5.6 \\
        $E$ & 0 & 0.3 & 1.5 \\
        $F$ & 0 & 0 & 0  \\\hline
      \end{tabular}
\captionof{table}{Vertices of the polytopes}\label{tab:vertices}
\end{tabular}
\vspace{-18pt}
\end{figure}

\begin{theorem}\label{theorem:exp_SB-P}
Adding a single cut can increase the size of the branch-and-bound tree exponentially when the tree is based on full strong branching with any one of the following scores: product score, linear score and ratio score. %exponentially. 
\end{theorem}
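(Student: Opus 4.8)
The plan is to read the two-dimensional polytope of Figure~\ref{fig:sb_example}, together with the objective $c=(6,5)$ implied by Table~\ref{tab:vertices}, as a \emph{switching gadget} whose sole purpose is to change which variable full strong branching selects at the root once the dashed cut $1.3x+y\le 1.4$ is added, and then to couple this gadget to a high-dimensional ``hard core'' on auxiliary variables so that the single flipped root decision cascades into an exponential gap in tree size. The tighter formulation $P'=P\cap\{1.3x+y\le 1.4\}$ will improve the root LP bound yet drive FSB onto the expensive path, reproducing at exponential scale the non-monotone phenomenon already isolated in Theorem~\ref{theorem:frac_rules}.

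First I would pin down the gadget computation. Over $P$ the LP optimum is $A=(0.9,0.5)$, where both variables are fractional; maximizing $c$ over the appropriate faces yields gains $\Delta^0_x=6.4$, $\Delta^1_x=0.15$ and $\Delta^0_y=1.9$, $\Delta^1_y=2.3$. After the cut removes $A$ and $B$, the optimum moves to $A'=(0.5,0.75)$, giving $\Delta^0_x=5.25$, $\Delta^1_x=0.25$ and $\Delta^0_y=0.75$, $\Delta^1_y=1.15$. The crux is that these numbers flip the winner for \emph{every} score: over $P$ the balanced $y$-gains beat the lopsided $x$-gains, so the product, linear ($\mu=1/6$), and ratio scores all select $y$; over $P'$ the large $\Delta^0_x$ dominates and all three scores instead select $x$. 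I would verify these three comparisons explicitly — a short calculation, but the coordinates in Table~\ref{tab:vertices} are evidently tuned so the inequalities hold with slack for all three rules simultaneously, which is exactly what the theorem demands.

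Next I would build the exponential instance around this switch. I would adjoin binary variables $z_1,\dots,z_n$ forming an infeasible cross-polytope-type core $H$ of the kind invoked in Theorem~\ref{theorem:frac_rules}, for which FSB in isolation is forced to build a complete binary tree on all $n$ variables and hence a tree of size $2^{\Omega(n)}$ (the same complete-tree behavior cited there from \cite{dadush2020complexity,dey2023theoretical}). The design goal is a single integer program on $(x,y,z_1,\dots,z_n)$, with the objective weighted so the gadget variables dominate the root score, in which branching on $y$ (the $P$-choice) immediately trivializes $H$ — for instance fixing $y$ forces the $z$-core into an integral or quickly-pruned configuration on both children — whereas branching on $x$ (the $P'$-choice) leaves $H$ intact and hands FSB the full core. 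The single lifted cut $1.3x+y\le 1.4$ then simultaneously improves the root LP bound (from $7.9$ to $6.75$ in the gadget coordinates) and flips the root decision from $y$ to $x$, so that $|T^{\mathrm{FSB}}(P,c)|$ is polynomial while $|T^{\mathrm{FSB}}(P',c)|\ge 2^{\Omega(n)}$.

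The main obstacle I anticipate is not the root flip, which is finished once the gadget numbers are checked, but controlling the FSB scores at \emph{every} node of the two subtrees so that branching actually stays on the intended path, for all three scoring functions at once, while keeping the coupled region a genuine polytope and the separator a single valid inequality. Concretely, I must ensure that on the $y$-branches the core really collapses after only polynomially many further branchings, with no stray fractional variable reopening it and no tie broken against me, and that on the $x$-branch the induced LP optima over $H$ reproduce the balanced, symmetric gains that compel the full depth-$n$ tree. Making these score comparisons robust — ideally via an induction over the core showing that, once $x$ is fixed, the $z$-variables always present equal gadget-independent scores — is where the real work lies.
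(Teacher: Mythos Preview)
Your gadget analysis is correct and matches the paper's: the root score indeed flips from $y$ to $x$ under the cut, for all three scoring rules. Where your proposal diverges from the paper is in the amplification architecture, and that is where a genuine gap opens.

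You want a single copy of the $(x,y)$-gadget attached to a separate cross-polytope core $H$ on $z_1,\dots,z_n$, arranged so that branching on $y$ first collapses $H$ while branching on $x$ first leaves $H$ intact. The difficulty is that faces commute: if the face $\{y=0\}$ (or $\{y=1\}$) forces the $z$-core to become integral or infeasible, then so does the face $\{x=\alpha,\,y=0\}$ for either $\alpha\in\{0,1\}$. Since $y$ remains fractional in both children after branching on $x$ in $P'$ (the LP optima are $E$ and $B'$, with $y=0.3$ and $y=0.1$), FSB will eventually branch on $y$ in the $P'$-tree as well, and at that moment $H$ collapses exactly as it does in the $P$-tree. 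The only escape is to ensure that, once $x$ is fixed, the score of $y$ falls below all $z_i$-scores and stays there for $\Omega(n)$ levels --- while simultaneously, in the $P$-tree, the $z_i$ are all integral at the root so they are not even candidates. You acknowledge this tension as ``where the real work lies,'' but it is not merely work: a root-level switch followed by a handoff to a fixed hard core yields at most a constant-factor difference, not an exponential one, unless this score-suppression mechanism is actually built, and nothing in the cross-polytope construction provides it.

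The paper sidesteps this entirely by using no separate core. It takes $n$ \emph{independent copies} of the two-dimensional gadget, $(x_i,y_i)\in P$ for $i=1,\dots,n$, linked only through an auxiliary continuous variable $z$ so that the single cut $z\le 14$ simultaneously tightens every copy from $P$ to $P'$. Without the cut, FSB branches on some $y_n$; the child $y_n=0$ has $x_n$ integral (vertex $C$) and is precisely $Q_{n-1}$, while $y_n=1$ finishes in two infeasible grandchildren, giving the linear recursion $T(n)\le T(n-1)+4$. With the cut, FSB branches on $x_n$ first and then on $y_n$; the two feasible grandchildren $(x_n,y_n)\in\{(0,0),(1,0)\}$ are \emph{both} copies of $Q'_{n-1}$, so the recursion doubles, and an integrality-gap argument shows pruning by bound cannot intervene for $\Theta(n)$ levels. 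The exponential blowup is thus the gadget recursing into two copies of itself, not a handoff to an external hard instance --- and that self-replicating recursion is the idea your plan is missing.
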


\begin{proof}
We prove this result for full strong branching with product score (FSB-P). The proof is identical for all the other scores. 
Consider the following polytopes,
\begin{align}
P &= \big\{(x, y) \in [0, 1]^2 \ |  -7x + y \le 0.3, \ 5x + 8y \le 8.5, 3x + 2y \le 3.7 \big\},    \\
P' &= \big\{(x, y) \in P \ | \ 13x + 10y \le 14 \big\},
\end{align}
where $P \cap \mathbb{Z}^2 = P' \cap \mathbb{Z}^2.$ The polytope $P$ is illustrated in Fig.~\ref{fig:sb_example} as $ABCFED$ and the polytope $P'$ as $A'B'CFED$. Observe that $P' \subset P$, %is a strict subset of $P$ 
is obtained by adding a single inequality to $P$ that is valid for all integer points in $P$. %For any MIP with LP relaxation $P$, it follows that $P'$ provides a tighter relaxation. Therefore, for a monotonic rule, $r$, it must hold that $|T^r(P')| \le |T^r(P)|$ by Definition \ref{def:monotonic}. 
%For our first set of results, we show that $|T^r(P')| > |T^r(P)|$
%%this doesn't hold 
%for strong branching with both product and linear scores, thereby showing these rules are not monotonic in general. 
Now, consider the 2-dimensional MIP with formulations, 
\begin{align}
   \max_{x, y} \ \{ 6x + 5y \ | \ (x, y) \in P \cap \mathbb{Z}^2 \} \label{mip:SB_P} \\
   \max_{x, y} \ \{ 6x + 5y \ | \ (x, y) \in P' \cap \mathbb{Z}^2 \}  \label{mip:SB_P'}
\end{align} 
and note the formulation with $P'$ is tighter.
Neither of these formulations has dual degeneracy. The objective values of all vertices of $P$ and $P'$ are presented in Table~\ref{tab:vertices}.
Observe that $A$ is the optimal solution over the LP relaxation $P$ and $C$ is the optimal solution of the MIP.

Next, consider the following MIP, referred to as $Q_{n}$
% \begin{subequations}
% \begin{alignat}{3}
%     \max \quad &6 \sum_{i=1}^n x_i + 5\sum_{i=1}^n y_i \\
%     \text{s.t.} \quad &(x_i, y_i) \in P &\text{ for } i = 1, \hdots, n\\
%     & 13 x_i + 10y_i \le z &\text{ for } i = 1, \hdots, n \label{eq:exp_con2}\\
%     & z \le 16.7 \label{eq:exp_con3} \\
%     &x_i, y_i \in \mathbb{Z} &\text{ for } i = 1, \hdots, n \\
%     & z \in \mathbb{R}
% \end{alignat}
% \end{subequations}
\begin{subequations}
\begin{alignat}{3}
    \max \quad &6 \sum_{i=1}^n x_i + 5\sum_{i=1}^n y_i \\
    \text{s.t.} \quad &(x_i, y_i) \in P \cap \mathbb{Z}^2 &\text{ for } i = 1, \hdots, n\\
    & 13 x_i + 10y_i \le z &\text{ for } i = 1, \hdots, n \label{eq:exp_con2}\\
    & z \le 16.7 \label{eq:exp_con3}, \ z \in \mathbb{R}. 
%    &x_i, y_i \in \mathbb{Z} &\text{ for } i = 1, \hdots, n \\
%    & z \in \mathbb{R}
\end{alignat}
\end{subequations}

Notice that since $z$ does not appear in the objective, and is only present in constraints (\ref{eq:exp_con2}) and (\ref{eq:exp_con3}), these constraints imply $13x_i + 10y_i \le 16.7$ for all $i \in [n]$ and are therefore redundant. %which can be verified by noting that it is satisfied by all vertices of $P$. 
Thus, the problem can be decomposed into $n$ independent MIPs where each subproblem is (\ref{mip:SB_P}). Consequently, the optimal solution to the LP relaxation is $(x_i, y_i) = (0.9, 0.5)$, i.e. vertex $A$ for all $i \in [n]$. %An important observation is that given the subproblems are independent, branching on a variable does not impact the other subproblems. 
%Following the same argument, the LP gains and consequently the strong branching scores for variables in other subproblems also remain unaffected. 

% At any node, first consider a variable pair where both $x_i$ and $y_i$ are free. 
%We will now analyse the strong branching decisions for this problem. 
Let $T(n)$ be the size of the tree for solving $Q_{n}$ using FSB-P rule.
%Consider the branching decision at the root node.

At the root node, by symmetry and the independence of subproblems, the LP gains and consequently the FSB-P scores of all variables are identical to their counterparts in MIP (\ref{mip:SB_P}). Consider tentatively branching on $x$ at the root node of the two-dimensional MIP using FSB-P. 
%Comparing $z_v$ for all vertices, we know that 
The optimal LP solution for the face $x = 0$ is $E$, and that of face $x = 1$ is $B$. Similarly, when tentatively branching on $y$, the optimal solution of the LP relaxation on face $y = 0$ is $C$, and that on face $y = 1$ is $D$. The scores are then calculated as, 
\begin{align*}
    \text{score}(x) &= \Delta^0_x \cdot \Delta^1_x = (z_A - z_E) \cdot (z_A - z_B) = 0.96. \\
    \text{score}(y) &= \Delta^0_y \cdot \Delta^1_y = (z_A - z_C) \cdot (z_A - z_D) = 4.37.
\end{align*}

Therefore, at the root node of $Q_n$, $\text{score}(x_i) = 0.96$ and $\text{score}(y_i) = 4.37$ for all $i \in [n]$. 
Ties may be broken arbitrarily and without loss of generality, FSB-P rule branches on $y_n$. 

% Following the analysis in Theorem~\ref{theorem:2dSB-P}, the node with $y_1=0$ has a better bound than the node $y_1 = 1$ and is therefore evaluated first.
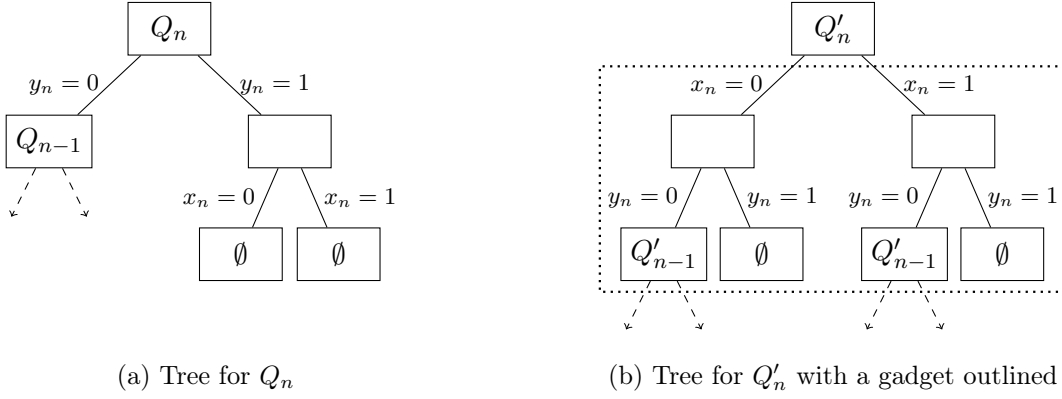
\begin{figure}
\captionsetup[subfigure]{aboveskip=12pt,belowskip=0pt}
  \centering

  \begin{subfigure}{0.47\textwidth}
    \centering
    \begin{tikzpicture}[level distance=1.5cm,
                        level 1/.style={sibling distance=3.2cm},
                        level 2/.style={sibling distance=1.3cm},
                        treenode/.style={draw, shape=rectangle, minimum width = 1.1cm, minimum height = 0.7cm},
                        edge from parent/.append style={font=\footnotesize}]
      \draw[black!0] (1, -0.5) rectangle (1,-4);  
      \node [treenode] {$Q_n$}
        child {node [treenode] (a) {$Q_{n-1}$}  
          % child {node [treenode] {4}}
          % child {node [treenode] {5}}
          edge from parent node [left] {$y_n=0$} 
        }
        child {node [treenode] {$\phantom{3}$}
          child {node [treenode] {$\emptyset$} edge from parent node [left] {$x_n=0$} }
          child {node [treenode] {$\emptyset$} edge from parent node [right] {$x_n=1$} }
          edge from parent node [right] {$y_n=1$} 
        };
        \path[->] (a) edge[dashed] +(-0.5,-1);
        \path[->] (a) edge[dashed] +(+0.5,-1);
      
    \end{tikzpicture}
    \caption{Tree for $Q_n$ } \label{fig:TQn}
  \end{subfigure}
  \hfill
  \begin{subfigure}{0.52\textwidth}
    \centering
    \begin{tikzpicture}[level distance=1.5cm,
                        level 1/.style={sibling distance=3.2cm},
                        level 2/.style={sibling distance=1.3cm},
                        treenode/.style={draw, shape=rectangle, minimum width = 1.1cm, minimum height = 0.7cm},
                        edge from parent/.append style={font=\footnotesize}]
      \node [treenode] {$Q'_n$}
        child {node [treenode] {$\phantom{3}$}
          child {node [treenode] (b) {$Q'_{n-1}$} edge from parent node [left] {$y_n=0$} }
          child {node [treenode] {$\emptyset$} edge from parent node [right] {$y_n=1$} }
          edge from parent node [left] {$x_n=0$} 
        }
        child {node [treenode] {$\phantom{3}$}
          child {node [treenode] (a) {$Q'_{n-1}$} edge from parent node [left] {$y_n=0$} }
          child {node [treenode] {$\emptyset$} edge from parent node [right] {$y_n=1$} }
          edge from parent node [right] {$x_n=1$} 
        };
        \path[->] (a) edge[dashed] +(-0.5,-1);
        \path[->] (a) edge[dashed] +(+0.5,-1);
        \path[->] (b) edge[dashed] +(-0.5,-1);
        \path[->] (b) edge[dashed] +(+0.5,-1);
        \draw[dotted, line width=0.3mm] (3.1, -0.5) rectangle (-3.1,-3.5);        
    \end{tikzpicture}
    \caption{Tree for $Q'_n$ with a gadget outlined} \label{fig:T'Qn}
  \end{subfigure}

  \caption{Illustration of strong branching trees for the example in Theorem \ref{theorem:exp_SB-P}.} \label{fig:exp_proof}
  \vspace{-12pt}
\end{figure}

Let's first consider the node $y_n = 0$.
Here, $x_n = 1$ in the LP optimal solution and therefore $x_n$ is not considered for branching any further. This holds true for any descendent node. Thus the problem at this node is equivalent to $Q_{n-1}$ and the size of the subtree rooted at 
%(and including) $y_n = 0$ is
this node is at most $T(n-1)$. 
% Thus, on any rooted path in the tree, strong branching never branches on $x_i$ prior to branching on $y_i$. 

Now consider the node where $y_n = 1$. 
% In Theorem \ref{theorem:2dSB-P}, when $n = 1$, this node was pruned by bound. But with $n$ copies of the problem, the integrality gap increases by a factor of $n$. %Thus, for large enough $n$, the bound at this node may be higher than the optimal IP value and this node may not pruned.} 
%Suppose that is the case.
If this node is not pruned by bound, then since %Here, 
$x_n$ is fractional in the LP solution at this node and branching on it generates two infeasible nodes leading to an infinite score for $x_n$. On the other hand, the scores of $x_i, y_i$ for all $i \neq n$ remain finite. %and at the same value as at the root node. 
Thus, FSB-P
chooses to branch on $x_n$ to create two infeasible nodes as depicted in Fig.~\ref{fig:TQn}. Thus, we have, 
\begin{equation}
    T(n) \le 4 + T(n-1)
\end{equation}
Applying the inequality recursively, and using $T(0) = 1$, we have $|T^{\text{FSB}}(Q_{n})| = T(n) \le 4n + 1$.

Next, consider the case where we add a single valid cut $z \le 14$.  The constraints (\ref{eq:exp_con2}) and (\ref{eq:exp_con3}), are now equivalent to a $13x_i + 10y_i \le 14$ for all $i$, which is exactly the cut that generates $P'$ from $P$. 
%Note that this does not eliminate any integer points and is thus a valid formulation for the same MIP. 
We refer to this problem as $Q'_{n}$ and observe that it can be decomposed into $n$ independent MIPs where each subproblem is (\ref{mip:SB_P'}). It is therefore a tighter formulation of $Q_n$. The optimal solution to the LP relaxation of $Q'_{n}$ is $(x_i, y_i) = (0.5, 0.75)$, i.e. vertex $A'$ for all $i \in [n]$, with optimal objective value $z^*_{LP} = 6.75n$. The optimal objective value of the MIP is $z^*_{IP} = 6n$ and therefore the additive integrality gap is $G = 0.75n$.

%Consider the branching decision at the root node. 

Following the same arguments as used in the analysis of $Q_n$, the FSB-P scores of all variables can be computed by considering the two-dimensional subproblems individually.
At the root node of (\ref{mip:SB_P'}), the scores for $x$ and $y$ can be similarly computed as follows, 
\begin{alignat*}{2}
    \text{score}(x) &= (z_{A'} - z_E) \cdot (z_{A'} - z_{B'}) 
    % &= (2.28 - 0.3 ) \cdot (2.28 - 2.1) \\
    = 1.3125 \\
    \text{score}(y) &= (z_{A'} - z_C) \cdot (z_{A'} - z_{D}) 
    % &= (2.28 - 2) \cdot (2.28 - 1.2) \\
    = 0.8625 
\end{alignat*}

Therefore, due to the symmetry and independence of subproblems, $\text{score}(x_i) = 1.3125$ and $\text{score}(y_i) = 0.8625$ for all $i \in [n]$ at the root node of $Q'_n$. Thus, %strong branching 
FSB-P rule branches on $x_n$, breaking ties arbitrarily without loss of generality. 
At both of the nodes thus created by fixing $x_n$, further fixing $y_n = 1$ leads to infeasibility, giving $y_n$ an infinite score whereas the scores of variables in other subproblems remain unchanged. Thus, both of these nodes branch on $y_n$ if not pruned by bound. %Moreover, this implies that any node created by branching on any $x_i$, if not pruned, must branch on $y_i$ if it is free.
More generally, 
any node created by branching on any $x_i$, if not pruned, must branch on $y_i$.

Once both $x_n$ and $y_n$ are fixed, %the $n$-th subproblem is fixed to an integer point, and 
the restricted MIP is equivalent to solving $Q'_{n-1}$, allowing the branching pattern to repeat. We call this repeating block of nodes a gadget and it is indicated in  Fig.~\ref{fig:T'Qn}. 
In general, a
gadget corresponding to subproblem $i$ branches on $x_i$ and $y_i$ in sequence and includes $2$ internal nodes in addition to the following $4$ nodes in the lowest level: (i) $(x_i, y_i) = (0, 0)$ with LP gain $z_{A'} - z_F = 6.75$, (ii) $(x_i, y_i) = (0, 1)$ which is infeasible, (iii) $(1, 0)$ with LP gain $z_{A'} - z_C = 0.75$, and (iv) 
$(x_i, y_i) = (1, 1)$ which is infeasible.
% \begin{enumerate}[topsep=0pt,itemsep=-1ex,partopsep=1ex,parsep=1ex]
%     \item  $(x_i, y_i) = (0, 0)$ with LP gain $z_{A'} - z_F = 6.75$
%     \item  $(x_i, y_i) = (0, 1)$ which is infeasible
%     \item  $(x_i, y_i) = (1, 0)$ with LP gain $z_{A'} - z_C = 0.75$
%     \item  $(x_i, y_i) = (1, 1)$ which is infeasible
% \end{enumerate}
%There are two important observations to be made here. First, since all subproblems are identical and independent, these LP gains are constant throughout the tree. 
%Thus, any gadget in the tree closes a gap of at most $g =6.75$. 
%Second, as evident from Fig. \ref{fig:T'Qn},  the number of gadgets doubles with every recursion. 
%as both nodes with $y=0$ are open unless they are pruned.

As the tree is composed of repetitions of possibly partial gadgets, we will estimate the size of the tree by counting the number of complete gadgets. 
It is straightforward to see that a gadget is incomplete only due to pruning by bound.  %The additive integrality gap at the root node is $G= n(z_{A'} - z_C) = 0.75n$. 
A node may be pruned by bound only if the gap closed by the tree at the node exceeds the integrality gap of $G$. Consider any feasible node at depth $2d$, where the root node is at depth 0 by convention. The maximum LP gain by a gadget at any feasible node is $g = 6.75$. Therefore, the gap closed by the tree at any feasible node at depth $2d$ is at most $gd$. This implies that at any feasible node till depth $2\, \floor{G/g}$, the gap closed is no more than $G$ and there is no pruning by bound till this depth.  

Using the fact that the number of gadgets doubles with an increase in depth by 2, the number of complete gadgets in the FSB-P tree is at least $2^{\floor{G/g}} - 1$.%\begin{equation*}
%    \sum_{d = 1}^{\floor{G/g}} 2^{d-1} = 2^{\floor{G/g}} - 1
%\end{equation*}
Thus, the number of nodes in the tree is at least, 
\begin{align*}
    |T^{\text{FSB}}(Q'_{n})| & \ \ge 
    \ 6 \ (2^{\floor{G/g}} - 1 )\ = 
    \ 6 \ (2^{\floor{\frac{0.75n }{6.75}}} - 1 ).\
\end{align*}
\end{proof}

\begin{remark}
The above result holds even if we do not assume the best-bound node selection rule, since in estimating an upper bound on $|T^{\text{FSB}}(Q_{n})|$ we did not consider pruning by bound.
%    The above theorem holds even when the assumptions made on the solver to define tree size are relaxed. Observe that in the estimating an upper bound on $|T^{SB}(Q_{n})|$ we did not consider pruning by bound. Thus the linear upper bound holds for any tree generated by strong branching given formulation $Q_{n}$. Moreover, when the assumptions are relaxed, any tree generated by strong branching given $Q'_{n}$ will contain the tree analysed above as a subtree and thus will have at least the same size.
\end{remark}

%\begin{corollary} Adding a single cut can exponentially increase the size of branch-and-bound trees generated by strong branching with linear scores, strong branching with ratio scores, most fractional rule and maximum separating distance rule. 
%\end{corollary}
%\begin{proof}
%    Since these branching rules make the same decision as strong branching with product scores for both relaxations $P$ and $P'$ for the 2-dimensional sub-problem, following the same arguments as in the proof Theorem \ref{theorem:exp_SB-P}, these produce the same trees and the result holds.
%\end{proof}

\section{Computational Experiments} \label{sec:computational}
%\red{To ascertain how common is it for strong branching to exhibit non-monotonicity in practice}, we conduct some computational experiment. 
In this section, our goal is to conduct computational experiments to ascertain how common is it for the strong branching rule to exhibit non-monotonicity. We focus our attention on the product rule for FSB (denoted as FSB-P) since it has been shown to be computationally superior \cite{achterberg2007constraint} to many other rules. 

\subsection{Cover Cuts for Multi-dimensional Knapsack Problem (MKP)}\label{sec:MKP}
\paragraph{Instances.}
Consider an instance $\mathcal{I}$ of the multi-dimensional knapsack problem (MKP), which can be formulated as follows,
\begin{eqnarray}
\label{mip:multiknapsack}
  \max\Bigg\{\sum_{i=1}^n c_i x_i \ \bigg| \ \sum_{i=1}^n a^j_i x_i \le b^j, \forall j \in [m], \ x \in \{0, 1\}^n\Bigg\}.
\end{eqnarray}
% \begin{alignat}{3}
% % \begin{split} \label{mip:multiknapsack}
%   \max \quad & \sum_{i=1}^n c_i x_i \notag \\
%     \text{s.t.} & \sum_{i=1}^n a^j_i x_i \le b^j \quad &  \text{for all } j = 1, \hdots, m \\
%     &x_i \in \{0, 1\}  &  \text{for all } i = 1, \hdots, n \notag
% % \end{split}
% \end{alignat}
%We conduct these experiments on 
We randomly generated $100$ instances with $20$ variables and $50$ constraints. 
%We generate 100 instances as follows. 
Weights $a_i^j$ are independent and set to $0$ with probability $0.25$, else uniformly picked from the set $\{1, \hdots 1000\}$. Capacity $b^j$ is set to 
$\lfloor 0.5\cdot \sum_{i = 1}^n a^j_i\rfloor$. 
%is half the sum of weights in the constraint and rounded to be integral. 
Prices $c_i$ are independently picked from the uniform distribution on the interval $[0, 1)$. By randomly sampling objective coefficients from a continuous distribution, we ensure that almost surely the instances do not exhibit dual degeneracy.

\paragraph{Experiments.}
%We first solve the linear relaxation and let $x^*$ be the LP solution. 
For each knapsack constraint in the instance, we try to find cover cuts
separating the root LP solution. %violated by $x^*$. 
% For completeness, details of (standard) IP used to separate these inequalities is presented in Appendix~\ref{sec:coversep}. 
We solve the following separation problem, 
\begin{alignat}{3} \label{CGIP}
   d_j^* = \max \quad & \sum_{i=1}^n x^*_i w_i - \sum_{i=1}^n w_i + 1 \notag \\
    \text{s.t.} & \sum_{i=1}^n a^j_i w_i \ge b^j + 1 \quad \\
    &w_i \in \{0, 1\}  &  \text{for all } i = 1, \hdots, n \notag
\end{alignat}
If the above cut-generating IP is infeasible, there are no cover cuts for the knapsack constraint $j$ and the constraint is in fact redundant. If the cut-generating IP is feasible, let $\hat{w}^j$ be the optimal solution and set $C^j$ be the support of $\hat{w}^j$. Then a valid cover cut is given by,
\begin{equation}
    \sum_{i \in C^j} x_i \ \leq \ |\: C^j| - 1
\end{equation}
Moreover, the cut separates $x^*$ from the integer hull of the MKP if and only if $d_j^* > 0$. 
Let $\mathcal{C}_\mathcal{I}$ be the set of all violated cover inequalities discovered. 
%All cover inequalities separating $x^*$ for instance $\mathcal{I}$ are saved as follows, 
%\begin{equation}   \mathcal{C}_\mathcal{I} = \big\{C^j \ | \ j \in \{1, \hdots, m\} \text{ and } d_j^* > 0 \big\}, 
%\end{equation}
For the first set of experiments, we add each cover cut individually. That is for every $C \in \mathcal{C}_\mathcal{I}$, we solve, 
\begin{eqnarray*} \label{model:onecut}
   \max \Bigg\{\sum_{i=1}^n c_i x_i \ \bigg| \ \sum_{i=1}^n a^j_i x_i \le b^j \ \forall j \in [m], \ \sum_{i \in C} x_i \ \leq \ |\: C \:| - 1, x\in \{0, 1\}^n\Bigg\},
\end{eqnarray*}
% \begin{alignat}{3} \label{model:onecut}
%    \max \quad & \sum_{i=1}^n c_i x_i \notag \\
%     \text{s.t.} & \sum_{i=1}^n a^j_i x_i \le b^j \quad &&  \text{for all } j = 1, \hdots, m \notag \\
%     & \sum_{i \in C} x_i \ \leq \ |\: C \:| - 1 & \\
%     &x_i \in \{0, 1\} &&  \text{for all } i = 1, \hdots, n \notag
% \end{alignat}
using branch-and-bound with FSB-P
%full strong branching 
rule.
 % Let $z_\mathcal{I}$ be the objective value of the linear relaxation of instance $\mathcal{I}$ and for every $C \in \mathcal{C}_\mathcal{I}$, let $z^C_\mathcal{I}$ be the objective value of the above tightened formulation. Similarly, let $T_\mathcal{I}$ and $T^C_\mathcal{I}$ be the size of strong branching trees for the respective formulations.
 For the next experiment, we add all separating cover cuts found for the instance. That is for every $\mathcal{I}$, we solve, 
\begin{eqnarray*}
 \label{model:allcut}
   \max \Bigg\{\sum_{i=1}^n c_i x_i \ \bigg| \ \sum_{i=1}^n a^j_i x_i \le b^j \forall j \in [m], \sum_{i \in C} x_i  \leq  |\: C \:| - 1 \ \forall C \in \mathcal{C}_\mathcal{I}, 
   x \in \{0, 1\}^n \Bigg\}.
\end{eqnarray*}

% \begin{alignat}{3} \label{model:allcut}
%    \max \quad & \sum_{i=1}^n c_i x_i \notag \\
%     \text{s.t.} & \sum_{i=1}^n a^j_i x_i \le b^j \quad &&  \text{for all } j = 1, \hdots, m \notag \\
%     & \sum_{i \in C} x_i \ \leq \ |\: C \:| - 1 \quad && \text{for all } C \in \mathcal{C}_\mathcal{I} \\
%     &x_i \in \{0, 1\}  &&  \text{for all } i = 1, \hdots, n \notag
% \end{alignat}
Let the LP relaxation of the original formulation have objective value $z$ and let the size of strong branching tree for this formulation be $T$. Define $\hat{z}$ and $\hat{T}$ similarly for a formulation strengthened by cuts. Moreover, let $z_{IP}$ be the value of the integer optimal solution. We compute the gap closed by the cuts $\Delta G$ and the change in tree size $\Delta T$ as follows, 
\begin{equation}
    \Delta G = \frac{z - \hat{z}}{z - z_{IP}}, \qquad \Delta T = \frac{\hat{T} - T}{T}.  \label{eq:delta_T_G}
\end{equation}
Lastly, the depth of cut, $\Delta d$ is the Euclidean distance between the LP solution and the separating hyperplane.

The experiments were conducted on a Python 3.10-based implementation of the naive branch-and-bound framework to ensure a controlled experiment, unaffected by other sophisticated processes of modern solvers. Gurobi 10.0.1 was used as the LP solver. 
FSB-P was implemented purely as an oracle returning a branching variable, without influencing any other process within branch-and-bound, thus allowing fair node counting \cite{gamrath2018measuring}.
Nodes were processed in the best-bound first order. Finally, neither presolve, nor additional cutting planes besides the cover cuts mentioned above were employed.

\vspace{-6pt}
\begin{figure}[ht]
	\centering
	\begin{subfigure}[c]{0.32\textwidth}
		\centering
		\includegraphics[width=\textwidth]{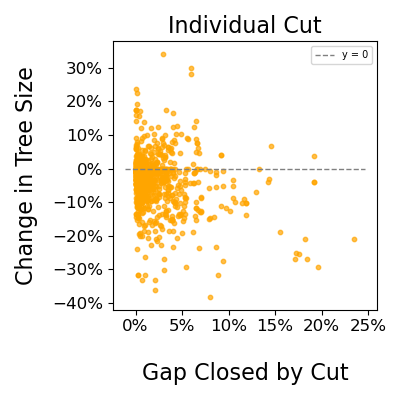}
		\caption{$\Delta T$ versus $\Delta G$ when an individual cut is added.  } \label{20:gap}
	\end{subfigure}
        \hfill
	\begin{subfigure}[c]{0.32\textwidth}
		\centering
		\includegraphics[width=\textwidth]{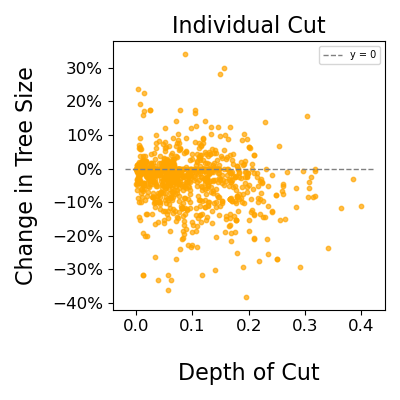}
		\caption{$\Delta T$ versus $\Delta d$ when an individual cut is added.  } \label{20:distance}
	\end{subfigure}
        \hfill
	\begin{subfigure}[c]{0.32\textwidth}
		\centering
		\includegraphics[width=\textwidth]{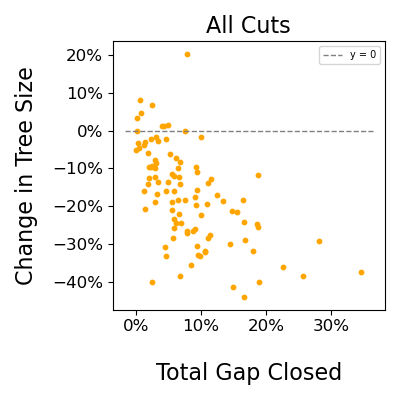}
        \caption{$\Delta T$ versus $\Delta G$ when all cuts are added. } \label{20:all}
    	\end{subfigure}
	\caption{Impact of adding cover cuts on the size of FSB-P trees for MKP instances.}
	\label{fig:mkp}
 \vspace{-12pt}
\end{figure} 

% \begin{figure}[h]
% 	\centering
% 	\includegraphics[width=0.48\textwidth]{20vars_all_cuts_gap.png}
% 	\caption{Change in the size of strong branching trees after adding all separating cover cuts to randomly generated multi-dimensional knapsack instances.}
% 	\label{fig:allcuts}
% \end{figure}
% The results are plotted in Fig.~\ref{fig:allcuts} showing change in the size of strong branching tree ($\Delta T^R_\mathcal{I}$) certificate against the improvement in bound ($\Delta z^R_\mathcal{I}$) produced upon adding a round of cover cuts for all instances.

%We solve all the instances to compute the LP bound at root node and the size of strong branching tree. 
%For each instance, we generate all separating cover cuts by solving CGIP (\ref{CGIP}) for every constraint. Once the separating cuts have been identified, we add them to the model, individually for the first experiment and simultaneously for the second. 
%We then solve all the formulations using strong branching and compute the improvement in LP bound and change in size of trees.

\paragraph{Results.} The results of the experiments are plotted in Fig.~\ref{fig:mkp}. For the first experiment, Fig.~\ref{20:gap} and \ref{20:distance} show the change in tree size due to adding a single cut against the percentage gap closed and depth of cut respectively, for all separating cover cuts across all instances. 
Note that the points above the $y=0$ line denote an increase in the tree size while points below the line indicate a decrease in tree size. 
Contrary to expectation, $19.8\%$ of the cuts led to an increase in tree size. Moreover, the correlation of the change in tree size with the gap closed by the cut as well as the depth of cut was weak. 

For the second experiment, all separating cuts found for the instance were simultaneously added. Fig.~\ref{20:all} presents the change in tree size against the gap closed by the cuts, where every point now represents an instance. 
Even when several cuts were added simultaneously, tree sizes for 8 out of 100 instances increased.
% However, for most instances, adding several cuts led to a reduction in the size of strong branching trees.
 {However, observe that the total gap closed by cuts in all of these instances was at most 10\%.

 Fig.~\ref{20:all} suggests that if the gap closed is small (also happens when just one cut is added), it is difficult to predict the direction of change in tree size. However, after enough gap is closed, there is a clear decreasing trend, and we may expect the size of the FSB-P tree to have reduced. %with a high probability. 
With this hypothesis in mind, we conduct the next set of experiments, where we study the change in tree size as more rounds of cuts are added.
}

% \red{Motivate the experiments in the next section - }
\subsection{Experiments on MIPLIB} \label{sec:MIPLIB_expt}
%These experiments on randomly 
The experiments in Section~\ref{sec:MKP} 
%generated instances 
allowed us to completely control all sources of variability in tree size, %to ensure 
ensuring that any %change 
increase in tree size is
an example of non-monotonicity. 
%a result of a change in branching decisions. 
However, a shortcoming of these experiments is that these instances do not represent the diversity of problems encountered in practice. With this in mind, we next consider instances from the Benchmark Set of MIPLIB 2017 \cite{miplib2017} and cuts applied by solvers in practice.

% We next consider the effect of cuts applied by solvers in practice on the Benchmark Set of MIPLIB 2017 \cite{miplib2017}. 
The goal of these experiments is to understand the effects of cutting planes on FSB-P tree sizes, and how that may vary as more gap is closed. In particular, the number of rounds of cuts applied at the root node is 
varied. After each additional round of cuts, the FSB-P tree size is computed.   %and the size of branch-and-bound trees are compared. 
The underlying hypothesis is that if enough gap is closed, the size of trees must decrease as the increased effectiveness of pruning by bounds outweighs possibly worse branching decisions.
While these experiments may be more relevant practically, we can no longer control dual degeneracy or variation in tree size due to it. We therefore run every instance with 3 different seeds. 

The experiments in this section are conducted using SCIP 8.0 \cite{BestuzhevaEtal2021OO} as the MIP solver and PySCIPOpt \cite{MaherMiltenbergerPedrosoRehfeldtSchwarzSerrano2016} as the API. All computations were done on a Linux based cluster. Whenever SCIP is called, presolve and propagation are disabled, cuts are %not allowed to be added besides at the root node, 
allowed only at the root node,
and branching variable is selected by the \textit{vanilla full strong branching} rule while disabling strong branching side-effects. 
For every instance, the ordering of variables remains the same for a fixed seed.
Moreover, independence of the branch-and-bound trees from node selection rules is ensured by providing the optimal MIP value to the solver.
In the absence of dual degeneracy, configuring SCIP to these settings eliminates all sources of performance variability in the number of nodes to the best of our understanding, and enables us to isolate the impact of any change in branching decisions.

\paragraph{Instances.}
Due to the expensive nature of full strong branching, we first restrict the set to ``easy" instances with at most 10,000 variables and 500 integer or binary variables. We also exclude infeasible instances. This set of candidates consists of 35 instances. These instances are then solved while restricting cuts to one round of separation at the root node. Those instances not solved in 72 hours are discarded. 
In addition, 1 instance was discarded since it was solved at the root node. 
This resulted in a final set of 21 instances that were used in the experiments discussed in the rest of this section.

\paragraph{Experiments.}
In these experiments, we allow SCIP to apply its default cuts at the root node, but limit the number of rounds of cuts it applies. Every instance and seed combination is solved 11 times, while the upper limit on the rounds of cuts is increased from 0 to 10. Tree size as well as the dual bounds at the root node are saved for comparison. 
% For each of these 10 limits, the solver is interrupted after the root node is processed and the cuts added by SCIP's separators are saved. Finally, all the strengthened formulations are solved with the same seed to compute tree sizes and dual bounds at root node. 
% No further cuts are allowed to be added in the final run. 
We then compute the gap closed by cuts and
change in tree size with respect to the formulation without cuts (run with the same seed), using Eq.(\ref{eq:delta_T_G}). 
% \red{In case the original formulation was not solved within the time limit, both gap closed and change in tree size are computed with respect to the formulation with the least rounds of cuts that was solved for the seed.}

\paragraph{Results.}

\begin{figure}[ht]
	\centering
	\begin{subfigure}[c]{0.32\textwidth}
		\centering
		\includegraphics[width=\textwidth]{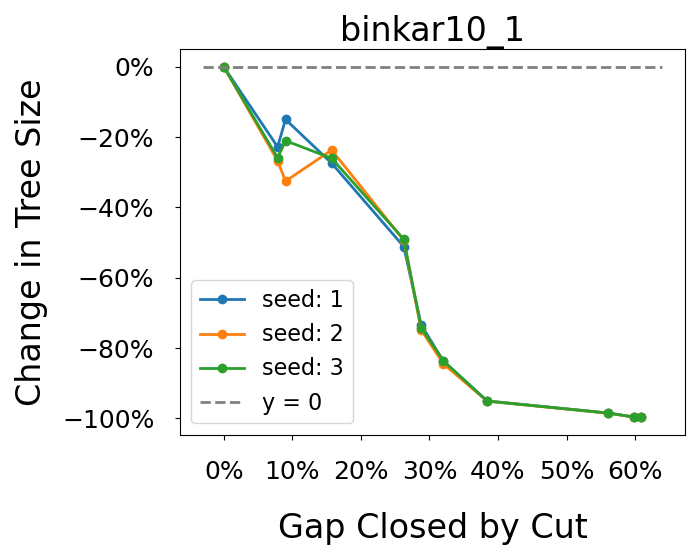}
	\end{subfigure}
	\begin{subfigure}[c]{0.32\textwidth}
		\centering
		\includegraphics[width=\textwidth]{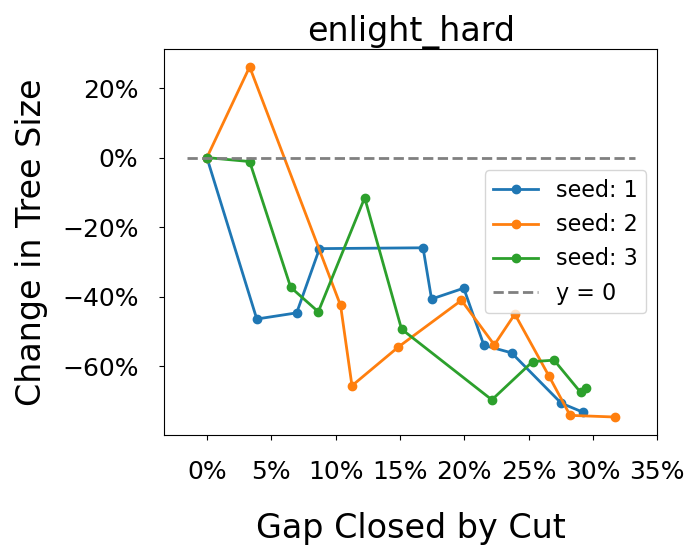}
	\end{subfigure}
        \begin{subfigure}[c]{0.32\textwidth}
		\centering
		\includegraphics[width=\textwidth]{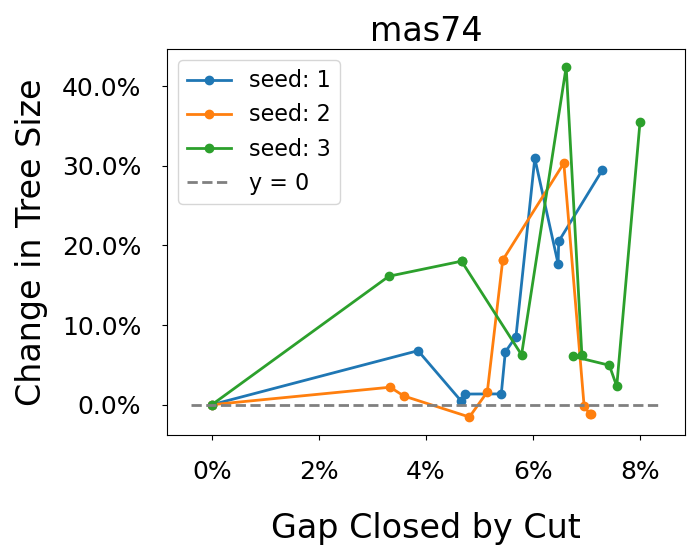}
	\end{subfigure}

        \vspace{6pt}
	\begin{subfigure}[c]{0.32\textwidth}
		\centering
		\includegraphics[width=\textwidth]{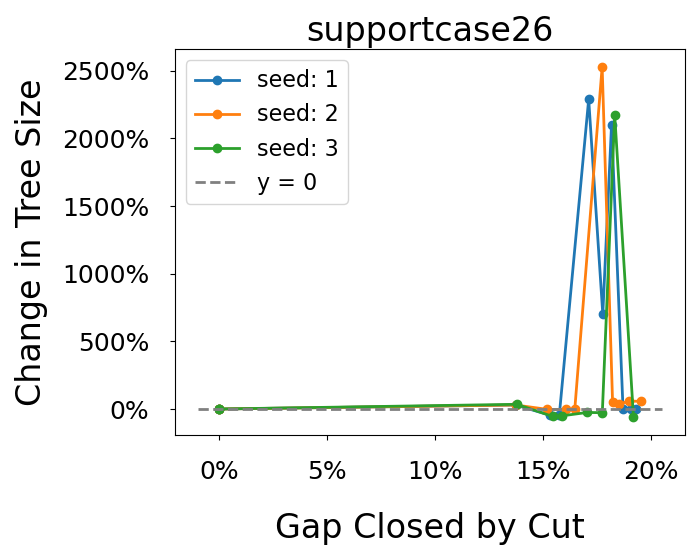}
	\end{subfigure}
        \begin{subfigure}[c]{0.32\textwidth}
		\centering
		\includegraphics[width=\textwidth]{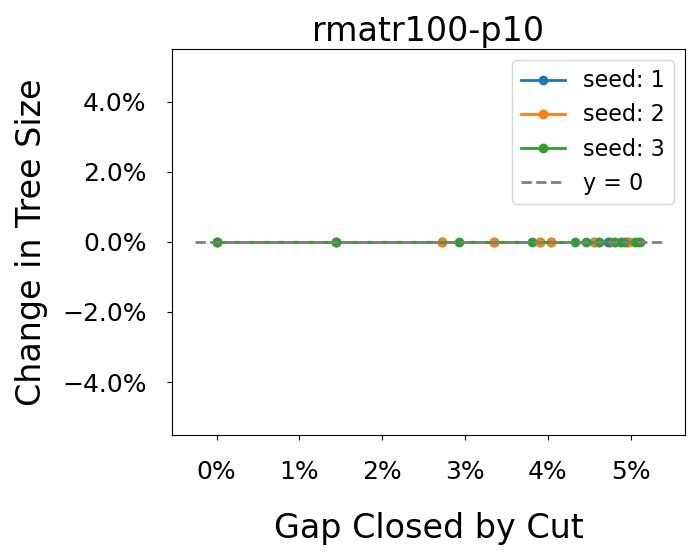}
	\end{subfigure}
	\begin{subfigure}[c]{0.32\textwidth}
		\centering
		\includegraphics[width=\textwidth]{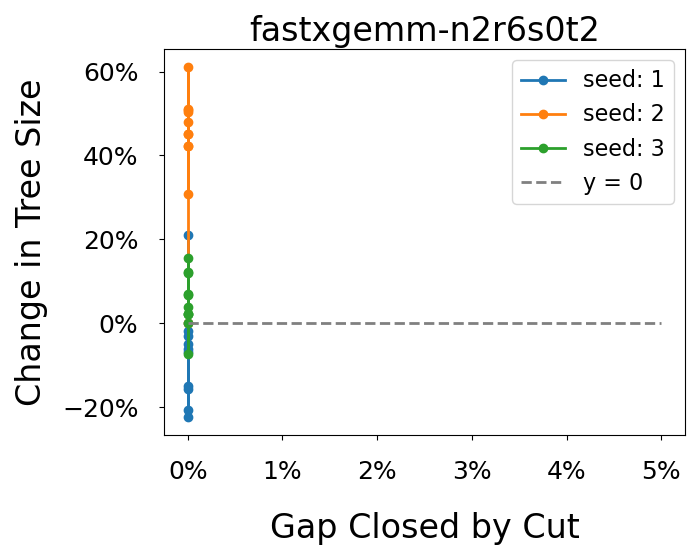}
	\end{subfigure}
	\caption{Progression of change in tree size and gap closed as the number of rounds of cuts is increased from 0 to 10.}
	\label{fig:miplib_instancewise}
 \vspace{-6pt}
\end{figure} 
For the first set of results, Fig.\ref{fig:miplib_instancewise} presents the progression of tree size and the gap closed as more rounds of cuts are added for a subset of instances. These instances have been selected to cover different patterns observed and the results for all others are included in Appendix~\ref{sec:allmiplibresults}. 

The first striking observation is that non-monotonicity was encountered in most of the instances and appears to be surprisingly common in practice.
Instances in the set showed varying sensitivity of tree size to a round of cuts and the trends in tree sizes could be categorized by the total gap closed in 10 rounds.
\begin{itemize}[parsep=6pt]
    \item When the total gap closed was large, ($\Delta G$ greater than 50\%), in line with expectation, instances binkar10\_1, mik-250-20-75-4 and n5-3 showed a steady decrease in tree size with increasing gap closed.
    \item For instances enlight\_hard and seymour1, where a moderate gap was closed ($\Delta G$ between 20\% and 50\%) the tree size had a generally decreasing trend with some intermittent rises.
    \item In most instances, the total gap closed was small ($\Delta G$ at most 20\%). These instances did not have a decreasing trend. Instances from mas, gen-ip and neos, istanbul-no-cutoff and ran14x18-disj-8 often had large spikes and drops with every round of cut. The largest jump in tree size was seen in the case of supportcase26 where, for all of the 3 seeds, the tree size increased by more than 20 times within a single round. On the other hand, rmatr100-p10 and glass-sc showed no change in tree size in spite of closing some gap.
    \item Finally, in the case of fastxgemm-n2r6s0t2, pk1, markshare\_4\_0 and mad, the gap closed is 0 even after 10 rounds of cuts, but the tree sizes changed significantly in both directions. %Zero gap closed is an indication that these instances have high dual degeneracy. This could also be a contributing factor towards to variability in tree size. %Though 
    It must be noted that these instances clearly have high dual degeneracy which also contributes to variability in size.
\end{itemize}
% Instances in the set showed varying \red{sensitivity of tree size to cuts.}
% In line with expectations, some instances including binkar10\_1, mik-250-20-75-4 and n5-3 showed a steady decrease in tree size with increasing gap closed. A common feature across them was that they all closed more than 50\% of the gap in 10 rounds of cuts which was significantly larger than others. In case of some others like enlight\_hard and seymour1, the tree size had a generally decreasing trend albeit with some intermittent rises. The gap closed by these was between 30-40\%. However, several others like mas74, istanbul-no-cutoff, and instances from gen-ip and neos showed larger spikes and drops with each round of cut. Most of these closed less than 10\% of the gap and none closed more than 20\%. The largest jump in tree size was seen in case of supportcase26 where the tree size increased by more than 20 times within a single round for all of the 3 seeds. Instances rmatr100-p10 and glass-sc showed no change in tree size inspite of closing some gap. On the other hand in case of fastxgemm-n2r6s0t2, pk1 and mad even after 10 rounds of cuts, the LP bound did not improve, but the tree sizes changed significantly in both directions.

An inference that can be drawn is that if the gap closed is small, change in tree size is difficult to predict, and often increases, possibly due to non-monotonicity. %unexpected effects of cuts on branching decisions. 
However, when a large enough gap is closed, a significant decrease in tree size may be expected. This is seen clearly in Fig.~\ref{fig:miplib_allinstances} where there are no data points with an increase in tree size when the gap closed exceeds 20\%. 
%Moreover, this trend holds even when, for any $k \in \{0, \hdots, 9\}$ the formulation after $k$ rounds of cuts is treated as the original model strengthened by subsequent rounds of cuts. This is discussed in Appendix \ref{sec:shifting_base_to_round}. 
Note that a very similar pattern is also seen in Fig.~\ref{20:all} for the randomly generated MKP instances.
% This figure shows a very similar pattern as seen in Fig.~\ref{20:all} for the randomly generated MKP instances.

\begin{figure}[ht]
	\centering
	\begin{subfigure}[c]{\textwidth}
            \hfill
		\includegraphics[width=0.8\textwidth]{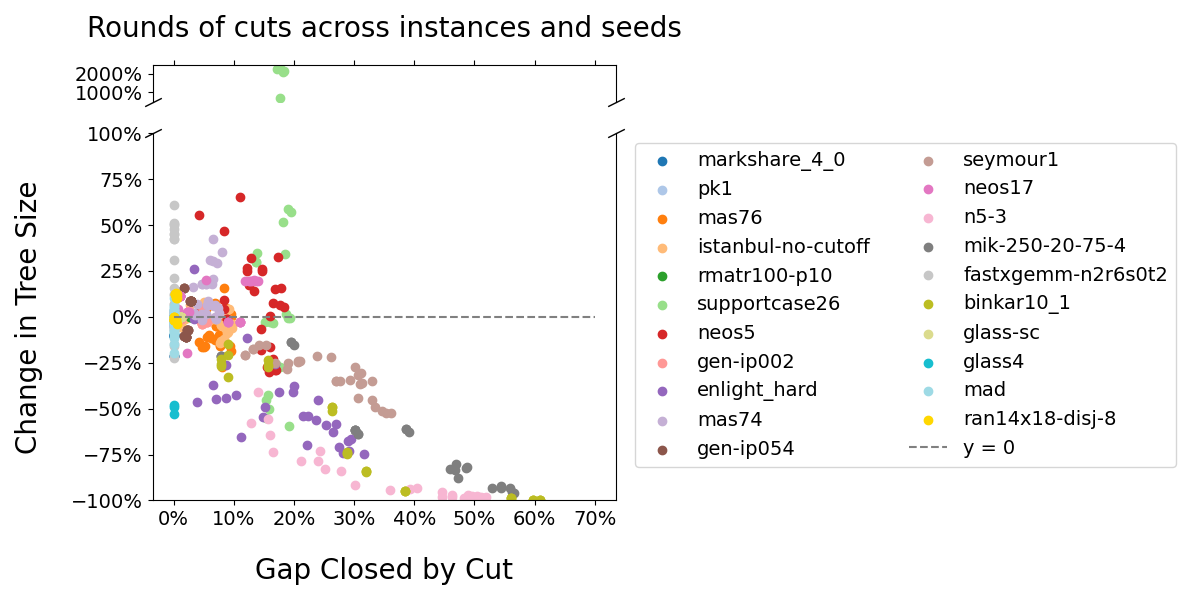}
	\end{subfigure}
	\caption{Change in tree size and the gap closed across all instances, seeds and limits on rounds of cuts.}
	\label{fig:miplib_allinstances}
 \vspace{-12pt}
\end{figure}

\section{Conclusion} \label{sec:conclusion}
% \red{
% \begin{enumerate}
    % \item  Takeaway/intention/conclusion of this paper is not to say branch-and-cut is bad but to highlight challenges. Often in practice, it is experienced that adding cuts makes it harder for solvers to solve the problem. May not always be because of engineering reasons, as there are mathematically challenges as well. 
    % \item Go for the kill - Although can't prove, if substantial gap is closed, tree size must decrease. Add cuts only when significant progress - adding only a few cuts can lead to bigger trees - Fischetti : Number of cuts may be large as long as each cut is ``good". 
    % \item one cut experiments - > individual cut selection not very insightful and should be studying cuts as a group
    % \item Future research directions and open questions
    % \begin{enumerate}
        % \item Monotonic branching rules that are good and inexpensive - bad rule (fixed indexed), expensive rule (optimal tree). 
        % \item Characterization of cutting planes that ensure monotonicity of strong branching
%         \item coordination between cuts and branching
%     \end{enumerate}
% \end{enumerate}
% }

%Branch-and-cut method is one of the most promising framework for solving general MIPs and is very efficient in practice. 
The branch-and-cut method is the most efficient framework for solving general MIPs in practice. The objective of this paper is not to claim otherwise, but to highlight some of the fundamental challenges in this approach. Sometimes in practice, it is experienced that adding cuts %makes it harder for solvers to solve the problem due to increase in number of nodes.
increases the number of nodes.
Through this work, we show that such aberrant behaviour may not always be because of engineering reasons like %numerical instability, 
the addition of cuts leading to a change in some default parameters or other random tie-breaking, but the problem may in fact be mathematical in nature. In particular, as shown in Section \ref{sec:theorectical_proofs}, %we show that 
standard branching rules may sometimes produce larger trees when provided with a tighter relaxation and are in general not monotonic. 
In Section \ref{sec:computational}, we show through computational experiments, that non-monotonicity is not merely a theoretical possibility, but is surprisingly prevalent in practice. When considering default cuts applied by SCIP on instances from the MIPLIB, non-monotonicity was encountered in most of the instances that were evaluated. We would like to emphasize here that these experiments are by no means an evaluation of SCIP, and the results do not reflect on its performance but only indicate inherent non-monotonicity in the branch-and-cut approach.

This work gives rise to %more questions than it answers. 
many open questions in the area of the design of branching rules.
%Whether there exists a good and efficient branching that is also monotonic is an open question. 
Consider a branching rule that fixes the branching decisions a priori without using local information from the linear relaxation. It is then clear that this rule is monotonic, even though it may generate large trees. On the other extreme, optimal branching ~\cite{dey2023theoretical} which considers global information is monotonic and produces small trees, but %is inefficient 
is \# P-hard to compute~\cite{glaser2023computing}
%cannot be generated in polynomial-time 
and is therefore impractical. It is not clear whether a good, efficient (polynomial-time complexity at each node) and monotonic rule even exists.

The results of this paper may also be viewed from the perspective of cutting-plane selection~\cite{andreello2007embedding,dey2018theoretical}.
%In the absence of good monotonic branching rules, an alternate approach could be to try to add cuts in a manner that ensures that the size of branch-and-bound tree decreases. \red{This is imperative since adding cuts anyway increases the computational burden of solving the linear relaxations.} 
Firstly, the depth of individual cuts is not a good measure 
of how much the tree size improves as seen in Fig.~\ref{20:distance}.
Also as we have seen, the empirical evidence from the computational experiments in Section \ref{sec:computational} indicates that if the gap closed by the group of cuts is small, we cannot be assured that adding them will result in smaller trees since their impact on branching decisions is hard to model due to possible non-monotonicity. However, when a substantial gap is closed, the size of the branch-and-bound tree decreases. %Although we can not prove it mathematically, the experiments indicate
In particular, empirical evidence suggests that crossing the 20\% gap-closed threshold via cuts could be a good rule of thumb for deciding whether the FSB-P tree size will decrease.

\paragraph{Acknowledgements.}
We would like to thank Avinash Bhardwaj for his feedback on preliminary computational results. We would also like to thank the support from AFOSR grant \# F9550-22-1-0052. 

\bibliographystyle{plain}
\bibliography{bibliography}
\newpage

\appendix

\section{MIPLIB Results} \label{sec:allmiplibresults}

The plots for all instances considered in Section. \ref{sec:MIPLIB_expt} are presented in Fig. \ref{fig:miplib_appendix_instancewise}.
\begin{figure}[H]
	\centering
	\begin{subfigure}[c]{0.32\textwidth}
		\centering
		\includegraphics[width=\textwidth]{images/normalized_binkar10_1.png}
	\end{subfigure}
	\begin{subfigure}[c]{0.32\textwidth}
		\centering
		\includegraphics[width=\textwidth]{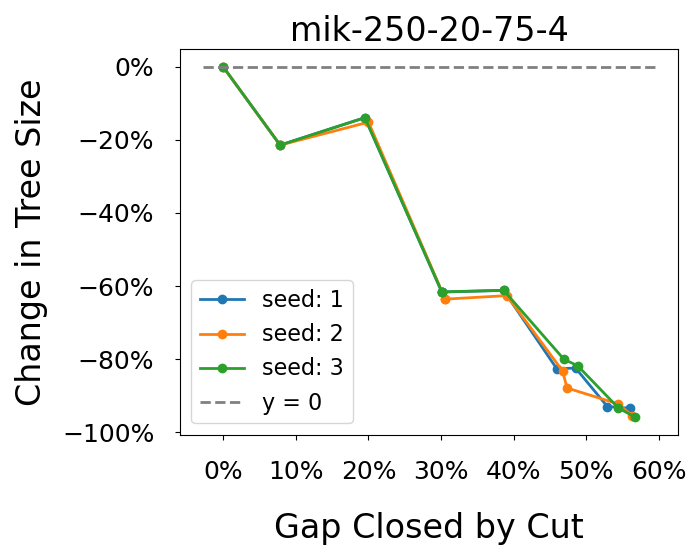}
	\end{subfigure}
        \begin{subfigure}[c]{0.32\textwidth}
		\centering
		\includegraphics[width=\textwidth]{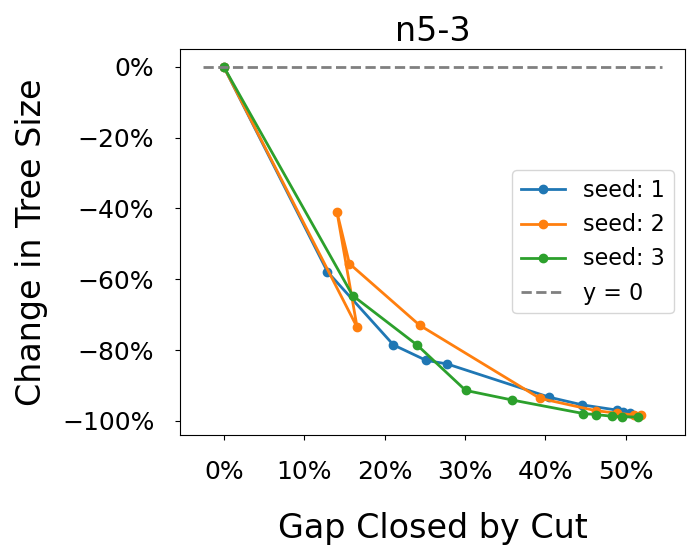}
	\end{subfigure}
	\begin{subfigure}[c]{0.32\textwidth}
		\centering
		\includegraphics[width=\textwidth]{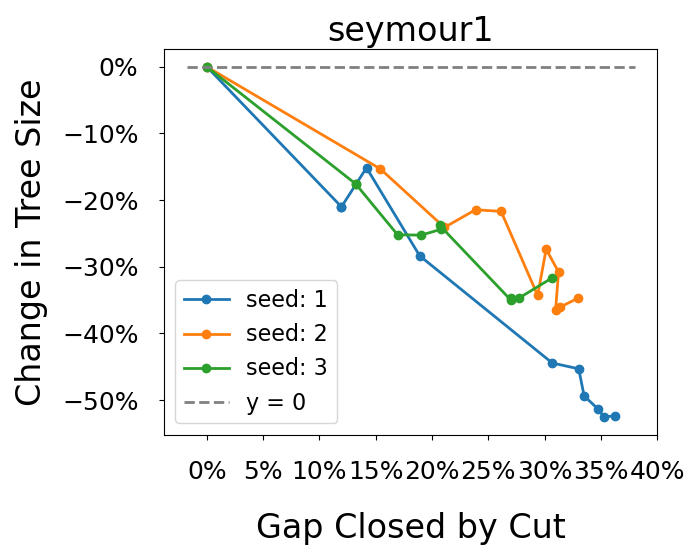}
	\end{subfigure}
        \begin{subfigure}[c]{0.32\textwidth}
		\centering
		\includegraphics[width=\textwidth]{images/normalized_enlight_hard.png}
	\end{subfigure}
	\begin{subfigure}[c]{0.32\textwidth}
		\centering
		\includegraphics[width=\textwidth]{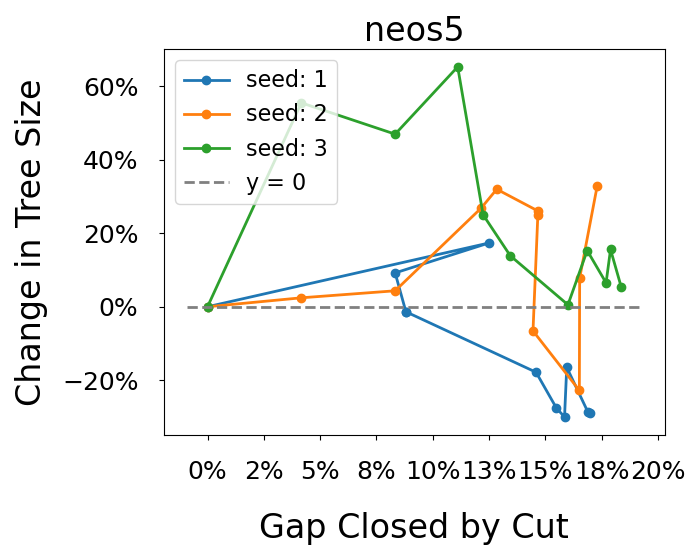}
	\end{subfigure}
	\begin{subfigure}[c]{0.32\textwidth}
		\centering
		\includegraphics[width=\textwidth]{images/normalized_supportcase26.png}
	\end{subfigure}
	\begin{subfigure}[c]{0.32\textwidth}
		\centering
		\includegraphics[width=\textwidth]{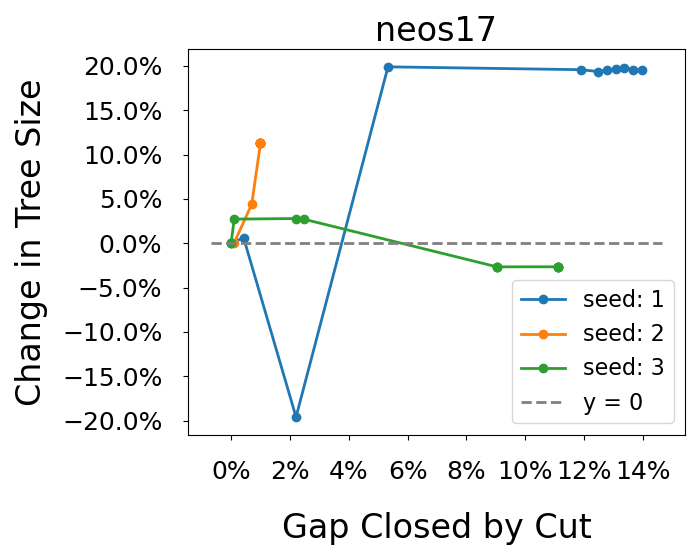}
	\end{subfigure}
        \begin{subfigure}[c]{0.32\textwidth}
		\centering
		\includegraphics[width=\textwidth]{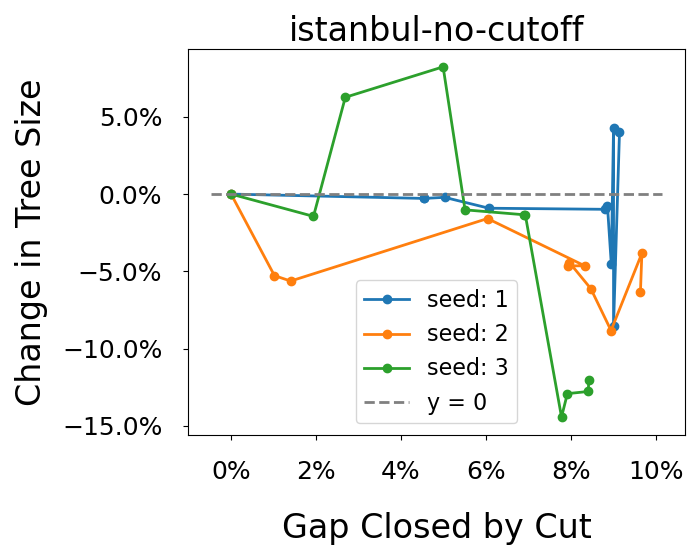}
	\end{subfigure}
        \begin{subfigure}[c]{0.32\textwidth}
		\centering
		\includegraphics[width=\textwidth]{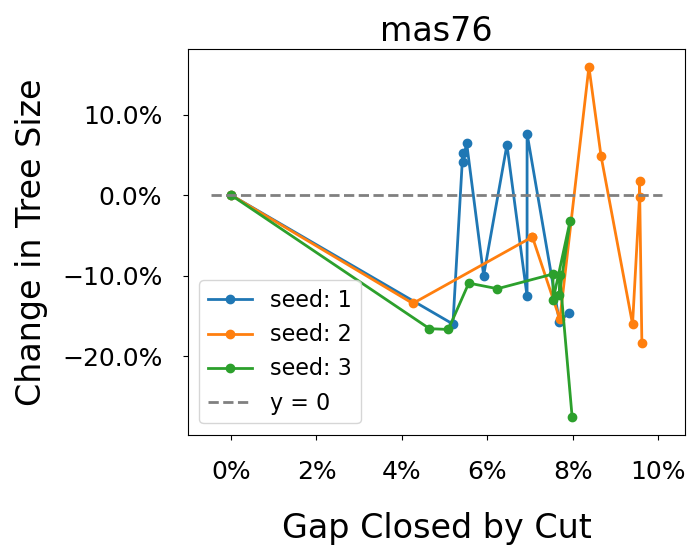}
	\end{subfigure}
        \begin{subfigure}[c]{0.32\textwidth}
		\centering
		\includegraphics[width=\textwidth]{images/normalized_mas74.png}
	\end{subfigure}
        \begin{subfigure}[c]{0.32\textwidth}
		\centering
		\includegraphics[width=\textwidth]{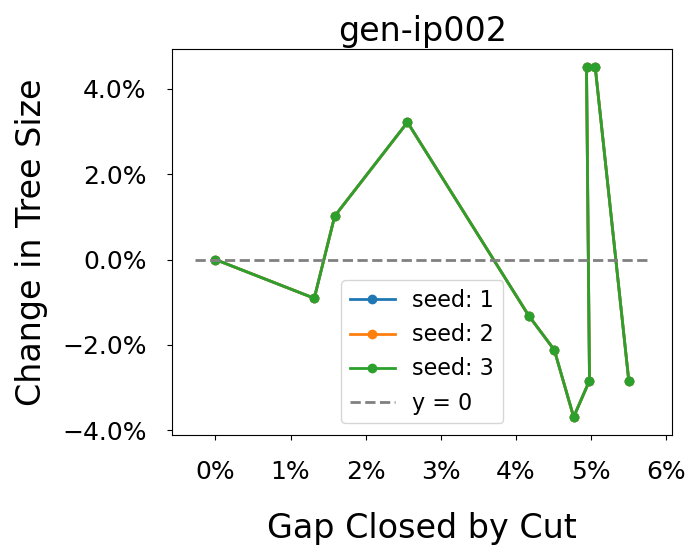}
	\end{subfigure}
	\caption{Progression of change in tree size and gap closed as number of rounds of cuts are increased from 0 to 10. \emph{(cont.)}}
	% \label{fig:miplib_appendix_instancewise}
\end{figure} 
\begin{figure}[H]
\ContinuedFloat
	\centering
	\begin{subfigure}[c]{0.32\textwidth}
		\centering
		\includegraphics[width=\textwidth]{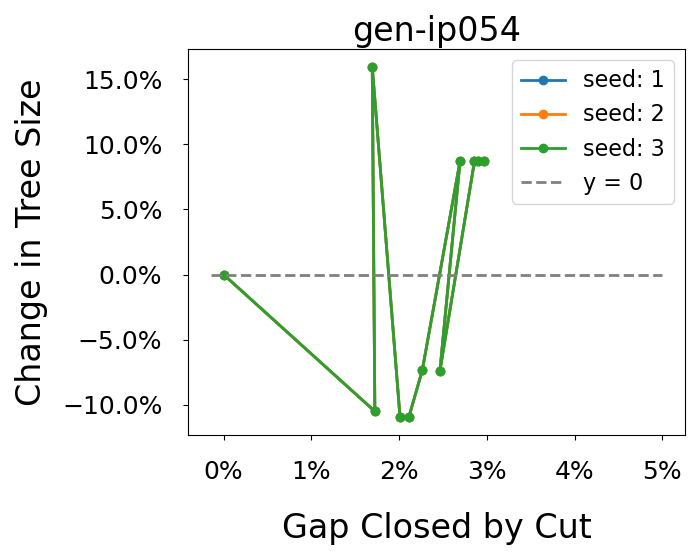}
	\end{subfigure}
        \begin{subfigure}[c]{0.32\textwidth}
		\centering
		\includegraphics[width=\textwidth]{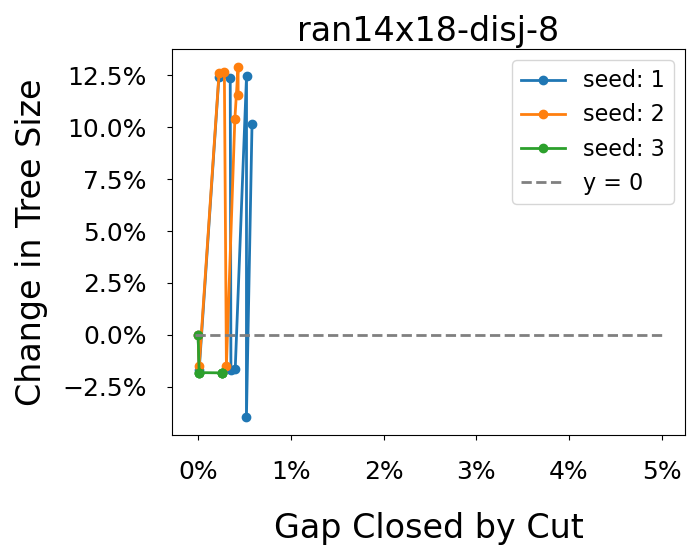}
	\end{subfigure}
        \begin{subfigure}[c]{0.32\textwidth}
		\centering
		\includegraphics[width=\textwidth]{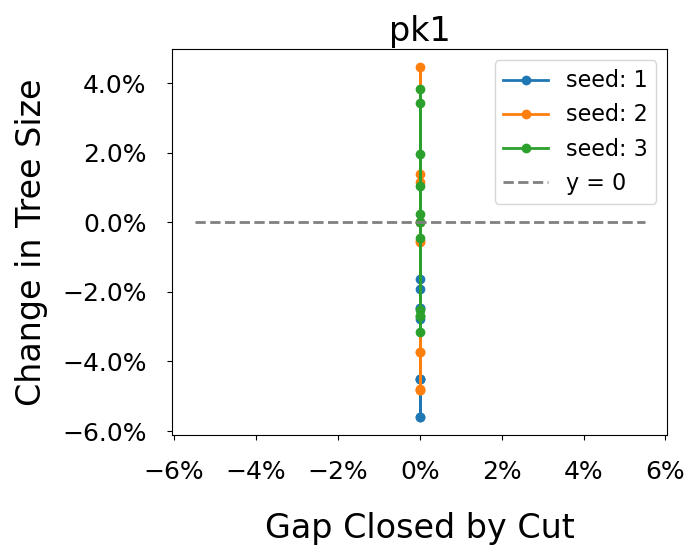}
	\end{subfigure}
	\begin{subfigure}[c]{0.32\textwidth}
		\centering
		\includegraphics[width=\textwidth]{images/normalized_fastxgemm-n2r6s0t2.png}
	\end{subfigure}
        \begin{subfigure}[c]{0.32\textwidth}
		\centering
		\includegraphics[width=\textwidth]{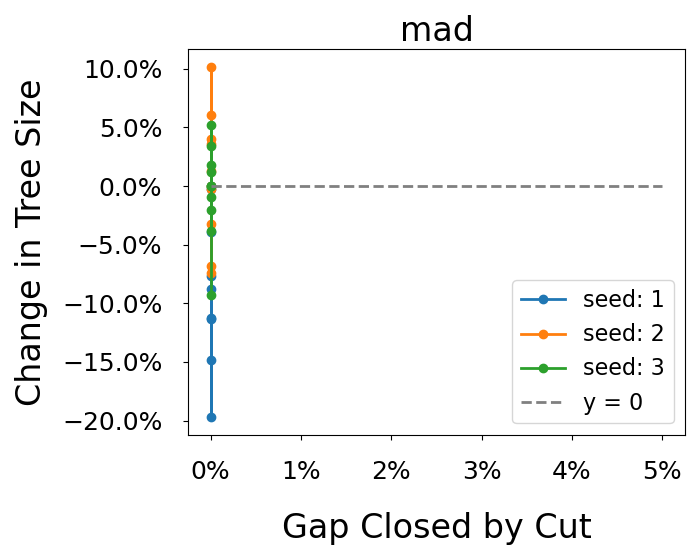}
	\end{subfigure}
	\begin{subfigure}[c]{0.32\textwidth}
		\centering
		\includegraphics[width=\textwidth]{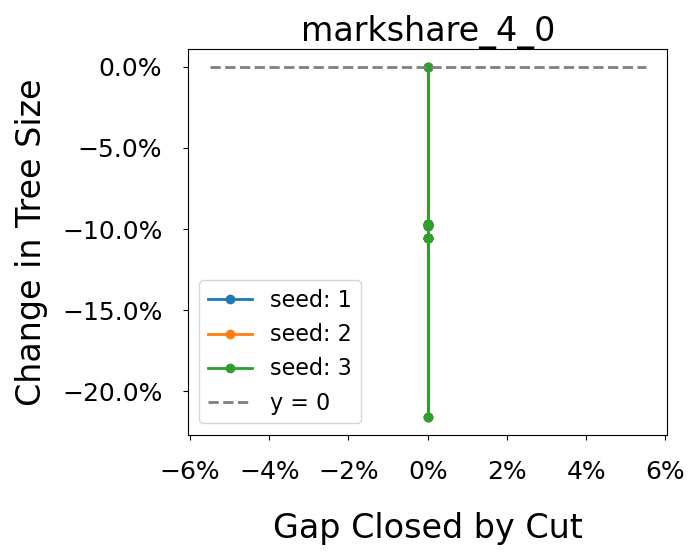}
        \end{subfigure}
        \begin{subfigure}[c]{0.32\textwidth}
		\centering
		\includegraphics[width=\textwidth]{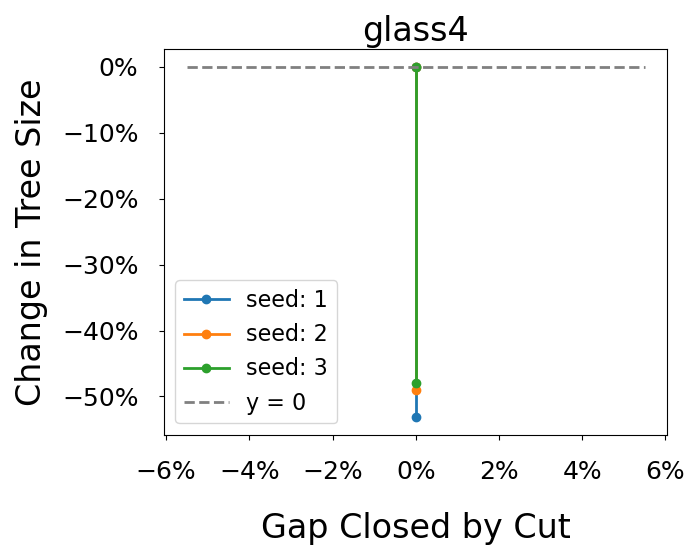}
	\end{subfigure}
        \begin{subfigure}[c]{0.32\textwidth}
		\centering
		\includegraphics[width=\textwidth]{images/normalized_rmatr100-p10.png}
	\end{subfigure}
	\begin{subfigure}[c]{0.32\textwidth}
		\centering
		\includegraphics[width=\textwidth]{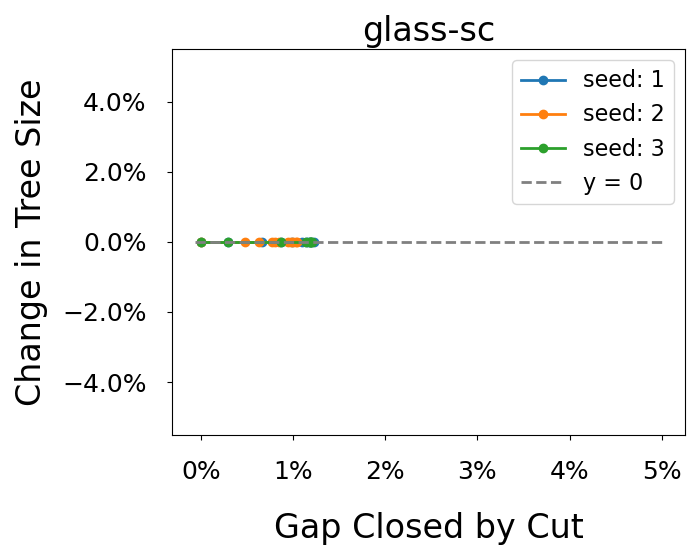}
	\end{subfigure}
	\caption{Progression of change in tree size and gap closed as number of rounds of cuts are increased from 0 to 10.}
	\label{fig:miplib_appendix_instancewise}
\end{figure} 

% \section{Shifting the base model}\label{sec:shifting_base_to_round}

% \begin{figure}[ht]
% 	\centering
% 	\begin{subfigure}[c]{0.49\textwidth}
% 		\centering
% 		\includegraphics[width=\textwidth]{images/all_instances_scatter_0.png}
% 	\end{subfigure}
% 	\begin{subfigure}[c]{0.49\textwidth}
% 		\centering
% 		\includegraphics[width=\textwidth]{images/all_instances_scatter_1.png}
% 	\end{subfigure}
%         \begin{subfigure}[c]{0.49\textwidth}
% 		\centering
% 		\includegraphics[width=\textwidth]{images/all_instances_scatter_2.png}
% 	\end{subfigure}
% 	\begin{subfigure}[c]{0.49\textwidth}
% 		\centering
% 		\includegraphics[width=\textwidth]{images/all_instances_scatter_3.png}
% 	\end{subfigure}
%         \begin{subfigure}[c]{0.49\textwidth}
% 		\centering
% 		\includegraphics[width=\textwidth]{images/all_instances_scatter_4.png}
% 	\end{subfigure}
% 	\begin{subfigure}[c]{0.49\textwidth}
% 		\centering
% 		\includegraphics[width=\textwidth]{images/all_instances_scatter_5.png}
% 	\end{subfigure}
%         \begin{subfigure}[c]{0.49\textwidth}
% 		\centering
% 		\includegraphics[width=\textwidth]{images/all_instances_scatter_6.png}
% 	\end{subfigure}
% 	\begin{subfigure}[c]{0.49\textwidth}
% 		\centering
% 		\includegraphics[width=\textwidth]{images/all_instances_scatter_7.png}
% 	\end{subfigure}
%         \begin{subfigure}[c]{0.49\textwidth}
% 		\centering
% 		\includegraphics[width=\textwidth]{images/all_instances_scatter_8.png}
% 	\end{subfigure}
	
% 	\caption{Change in tree size and gap of subsequent rounds of cuts with respect to round $k$}
% 	\label{fig:shited_scatter}
% \end{figure} 

% \end{subappendices}

\end{document}